\def\BBox{\vrule height 0.5em width 0.6em depth 0em}
\newtheorem{thm}{Theorem}[section]
\newtheorem{exm}[thm]{Example}
\newtheorem{algm}[thm]{Algorithm}
\newtheorem{lem}[thm]{Lemma}
\newtheorem{prop}[thm]{Proposition}
\newtheorem{cor}[thm]{Corollary}
\newtheorem{rmk}[thm]{Remark}
\newtheorem*{proof}{Proof}
\newcommand{\RPlus}{\textit{R}_{+}}
\newcommand{\abs}[1]{\left\vert#1\right\vert}
\newcommand{\norm}[1]{\left\| #1 \right\|}
\newcommand{\seq}[1]{\left<#1\right>}
\newcommand{\set}[1]{\left\{#1\right\}}
\newcommand{\cprk}{\textit{cp-rank}}
\newcommand{\al}{\alpha}
\newcommand{\be}{\beta}
\newcommand{\eps}{\epsilon}
\begin{document}
\title{When does cp-rank equal rank?}

\author{Wasin So \thanks{Department of Mathematics, San Jose State University, San Jose, CA 95120, USA.
 Email: wasin.so@sjsu.edu.}
\and 
Changqing Xu \thanks{ Corresponding author.  
Department of Applied Mathematics,
Suzhou University of Science and Technology, Suzhou, 215009, China   Email: cqxurichard@mail.usts.edu.cn.}}
\date{}
\maketitle

\begin{abstract}
The problem of finding completely positive matrices with  equal cp-rank and rank is considered.
We give some easy-to-check sufficient conditions on the entries of a doubly nonnegative matrix
for it to be completely positive with equal cp-rank and rank. An algorithm is also presented to show its efficiency.  

\end{abstract}

\section{Introduction}

An $n\times n$ real matrix $A$ is called \textsl{completely positive} (CP) if there is some entrywise nonnegative matrix $B\in \textit{R}^{m\times n}$  such that $A$ can be factorized as $A=B^T B$. The minimum $m$ is called the \cprk of $A$ and is denoted by  \cprk$(A)$.  Completely positive matrices were used to deal with configuration problems in combinatorics \cite{h1}, and were also employed in statistics \cite{gw}. Shashua and Hazan  applied completely positive factorization method   to cluster data sets in computer vision \cite{sh}. Recent results (up to 2003) on completely positive matrices are presented in the book by Berman and Shaked-Monderer \cite{bs}. The latest results on CP matrices are about their geometric interpretation   by cone theory  and  are closely related to mathematical programming \cite{ab,b2,bad,ds}.

\indent Let $\textbf{R}$ be the set of real numbers, and  $\textbf{R}_+$ be the set of nonnegative real numbers. An $n\times n$ entrywise nonnegative matrix is called \textsl{doubly nonnegative} (DN) if it is also positive semi-definite (PSD). 
As usual, we denote $PSD_n$ the set of PSD matrices of order $n$,  $DN_n$ the set of DN matrices of
order $n\ge 1$, and $CP_n$ the set of  CP matrices of order $n\ge 1$. 

\indent A CP matrix is obviously a DN matrix, but the converse is not necessarily true in general. Among many sufficient conditions  for a DN matrix to be CP, we mention the one by Kaykobad using diagonally dominance.
 A diagonally dominant matrix $A=[a_{ij}]$ satisfies the condition  $|a_{ii}| \ge \sum_{j \ne i} |a_{ij}|$
for all $i$. Kaykobad  \cite{ka} proved that a  diagonally dominant DN matrix is indeed a  CP matrix with  \cprk $\le  $ number of nonzero
 entries above the diagonal + number of strictly diagonally dominant rows.

\indent A CP matrix is more than just a DN matrix. Among many necessary conditions, we include Proposition 1.1,
which is a consequence of a stronger result in \cite{djl}:
A DN matrix $A$ with a
triangle-free graph $G(A)$ is CP if and only if its comparison matrix $M(A)$ is positive semidefinite,
 in this case, \cprk$(A)=\max\{ rank(A), e(G(A))\}$, where $e(G(A))$ is the number of edges of $G(A)$.
 Recall that (i) for a symmetric matrix $A=[a_{ij}]$, we define $G(A)$ as an undirected graph on the vertex set $\{1,2,\ldots,n\}$, and for $i \ne j$,  $(i,j)$ is an edge if and only if $a_{ij} \ne 0$,
(ii) for nonnegative $A$, we define $M(A)=2D(A)-A$, where $D(A)$ is the diagonal part of $A$.
We provide a proof of Proposition 1.1 because it is different from the one in  \cite{djl}.

\begin{prop}
Let $A \in CP_n$ with $G(A)$ being a cycle and $n \ge 4$.
Then off-diagonal sum $\le$ diagonal sum, and  \cprk$(A) \ge n$.
\end{prop}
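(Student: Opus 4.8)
The plan is to read everything off a completely positive factorization together with the fact that $G(A)$ is a cycle. Write $A=B^TB$ with $B=[b_1,\dots,b_n]$, where each column $b_i$ is a nonnegative vector in $\textbf{R}^m$ and $m=\cprk(A)$; then $a_{ij}=b_i^Tb_j$ and $a_{ii}=\norm{b_i}^2$. Since $G(A)$ is the $n$-cycle, after relabelling the vertices the only nonzero off-diagonal entries are $a_{i,i+1}$ with indices read cyclically (so $a_{n,n+1}:=a_{n,1}$), while $a_{ij}=0$ for every pair $i,j$ that is nonadjacent on the cycle.

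For the first assertion I would use only that $A$ is DN. Each $2\times 2$ principal submatrix of a PSD matrix has nonnegative determinant, so $a_{i,i+1}\le\sqrt{a_{ii}\,a_{i+1,i+1}}$ (nonnegativity of the entries lets me drop the absolute value), and AM--GM gives $\sqrt{a_{ii}\,a_{i+1,i+1}}\le\frac12(a_{ii}+a_{i+1,i+1})$. Summing these bounds over the $n$ edges of the cycle, each diagonal entry $a_{ii}$ is counted once for each of its two incident edges, so the factor $\frac12$ collapses the right-hand side to exactly $\sum_i a_{ii}$; hence the off-diagonal (edge) sum is at most the diagonal sum.

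The second assertion is where complete positivity and the hypothesis $n\ge 4$ are essential. The key is to track supports: let $S_i=\{k:(b_i)_k>0\}$. For nonadjacent $i,j$ the relation $b_i^Tb_j=a_{ij}=0$ together with nonnegativity of $b_i,b_j$ forces $S_i\cap S_j=\emptyset$, whereas for an edge $\{i,i+1\}$ we have $a_{i,i+1}>0$ and so $S_i\cap S_{i+1}\ne\emptyset$. Dually, for each row index $k$ put $T_k=\{i:(b_i)_k>0\}$; the disjointness condition says precisely that $T_k$ is a clique of $G(A)$. Because the $n$-cycle with $n\ge 4$ is triangle-free, every clique has at most two vertices, so each $T_k$ is contained in a single edge. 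But every one of the $n$ edges $\{i,i+1\}$ must have $a_{i,i+1}>0$, which requires some row $k$ with $\{i,i+1\}\subseteq T_k$, i.e.\ $T_k=\{i,i+1\}$; and one row can realize at most one edge. Thus the $n$ edges require $n$ distinct rows, giving $m\ge n$, that is $\cprk(A)\ge n$.

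I expect the inequality to be routine; the real content is the support/clique translation in the second part. The delicate point is to argue cleanly that \emph{each row supports at most one edge}, which is exactly the place where triangle-freeness — and hence $n\ge 4$ — enters: for $n=3$ a single row could carry the whole triangle, and indeed the conclusion fails there (e.g.\ the all-ones $3\times3$ matrix has $\cprk=1$). I would therefore phrase the $T_k$ as cliques and invoke the girth of the cycle, rather than attempt any rank inequality directly.
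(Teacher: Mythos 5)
Your second assertion is handled correctly: the sets $T_k=\{i:\,b_i(k)>0\}$ are cliques of $G(A)$, triangle-freeness of the $n$-cycle for $n\ge 4$ forces $|T_k|\le 2$, each of the $n$ edges must be realized by its own row, hence \cprk$(A)\ge n$. This is the paper's own argument in dual form: the paper counts the pairwise disjoint nonempty sets $S_i\cap S_{i+1}\subseteq\{1,\dots,r\}$, and the rows in $S_i\cap S_{i+1}$ are exactly your rows $k$ with $T_k=\{i,i+1\}$, so that half is fine.

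The first assertion, however, has a genuine gap, located precisely where you decided complete positivity is not needed. ``Off-diagonal sum'' in Proposition 1.1 means the sum over all $i\ne j$, i.e.\ $2(a_{12}+a_{23}+\cdots+a_{n1})$; the claim is therefore $a_{12}+\cdots+a_{n1}\le\frac12(a_{11}+\cdots+a_{nn})$, which is exactly what the paper's displayed computation proves, and which is the necessary condition $e^TM(A)e\ge 0$ on the comparison matrix coming from the Drew--Johnson--Loewy criterion. Your DN-only argument (PSD $2\times 2$ minors plus AM--GM summed around the cycle) yields only edge sum $\le$ full trace --- weaker by a factor of $2$ --- and no DN-only argument can close this gap, because the stated inequality is simply false for DN matrices: take $A=I+tC$ with $C$ the adjacency matrix of the $5$-cycle and $\frac12<t\le\frac{1}{2\cos(\pi/5)}$; then $A$ is doubly nonnegative with $G(A)=C_5$, but its off-diagonal sum is $10t>5=\mathrm{tr}(A)$. (The proposition is listed in the paper among conditions that separate CP from DN, so its first half cannot be a DN fact.) The repair is available inside your own support machinery: since each row supports at most one edge, bound $a_{i,i+1}\le\frac12\sum_{k:\,T_k=\{i,i+1\}}\bigl[b_i(k)^2+b_{i+1}(k)^2\bigr]$ and sum over the $n$ edges; because the rows with $T_k=\{i-1,i\}$ and those with $T_k=\{i,i+1\}$ are disjoint, each term $b_i(k)^2$ occurs at most once on the right, so the total is at most $\frac12\sum_i\sum_k b_i(k)^2=\frac12\sum_i a_{ii}$. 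This is precisely the paper's proof, which uses the disjointness of $S_{i-1}\cap S_i$ and $S_i\cap S_{i+1}$ inside $S_i$ --- i.e.\ the CP structure you reserved for the rank bound is equally essential for the inequality.
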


\begin{proof}
  Let $A=[a_{ij}]$. Since $G(A)$ is a cycle, by relabeling if necessary, we can assume that
the nonzero entries of $A$ above the diagonal are exactly  $a_{i,i+1}$ for $i=1,\ldots,n$.
(Here we adopt the convention that $n+1$ is 1 for any subscript.)
Now suppose that $A \in CP_n$ and so $A=B^TB$ where $B=[b_1\;b_2\;\cdots\;b_n]$
for some $b_i \in \textbf{R}_+^r$, and $r=$ \cprk$(A)$. Let  $b_i(k)$ be the $k$-th
entry of the vector $b_i$. Also denote $S_i$ the support of the vector $b_i$, i.e.,
$S_i = \{ k : b_i(k) \ne 0 \} \subseteq \{ 1, 2, \ldots, r\}$. Since  $B$ is a nonnegative
matrix and $A=B^TB$, we have $a_{ij} =0 $ if and only if $b_i^T b_j =0$ if and only if $S_i \cap S_j = \emptyset$.
Using the fact that $G(A)$ is a cycle and $n \ge 4$, we deduce that $S_1 \cap S_2, S_2 \cap S_3, \ldots,
S_{n-1} \cap S_n, S_n \cap S_1$ are pair-wise disjoint nonempty sets.
It follows that \cprk$(A)=r \ge |S_1 \cup \cdots \cup S_n| \ge |S_1 \cap S_2| + \cdots + |S_{n-1} \cap S_n| + | S_n \cap S_1| \ge n$.
Moreover, $( S_{i-1} \cap S_i ) \cup ( S_i \cap S_{i+1} ) = S_i$
because $S_i \cap S_j = \emptyset $ if $j \ne i-1, i+1$.
Now for nonzero off-diagonal entries
\[ a_{i,i+1} = b_i^T b_{i+1} = \sum_{k \in S_i \cap S_{i+1}} b_i(k) b_{i+1}(k)
\le \frac{1}{2} \sum_{k \in S_i \cap S_{i+1}} [ b_i(k)^2 + b_{i+1}(k)^2 ]. \]
Consequently, the conclusion follows from the computation below:
\begin{eqnarray*}
a_{12} +a_{23} + \cdots + a_{n-1,n} + a_{n1} &\le& \frac{1}{2} \sum_{i=1}^n \sum_{k \in S_i \cap S_{i+1}} [b_i(k)^2 + b_{i+1}(k)^2] \\
&=& \frac{1}{2} \sum_{i=1}^n \sum_{k \in ( S_{i-1} \cap S_i ) \cup ( S_i \cap S_{i+1}] ) } b_i(k)^2 \\
&=& \frac{1}{2} \sum_{i=1}^n \sum_{k \in  S_i } b_i(k)^2 \\
&=&   \frac{1}{2} \sum_{i=1}^n  a_{ii} \\
&=& \frac{1}{2} ( a_{11}  + \cdots + a_{nn} ). \; 
\end{eqnarray*}  
\end{proof} 
  
\noindent Note that Proposition 1.1 does not hold for $n<4$: 
A CP matrix $A$ of order 3 with $G(A)$ as a triangle can have \cprk$(A)=2$ or even 1.
For example, if $A=J$ is an all-ones matrix of order 3, then $A$ is CP with \cprk$(A)=1$.

A necessary and sufficient condition for a DN matrix to be CP is given in \cite{x},
where a geometric description of $CP_n$  using convex cone theory is given.
%This is surely a breakthrough in the study of CP matrices. 
        However, this necessary and sufficient condition is not convenient to be employed
       for determining whether a DN matrix of  order greater than 4 is CP or not. Therefore, there is
        a need to search for other (easy-to-check) conditions.

 For any $A \in CP_n$, it is known that $rank(A) \le$ \cprk$(A)$. Generally the cp-rank of a CP matrix can be very larger than its rank.  It is interesting to determine when the equality holds. This problem was first put forward by N. Shaked-Monderer (\cite{N1},\cite{N2}) . She also proved that a nonnegative matrix generated by a Soules matrix is CP with cp-rank equal to the rank \cite{N2}. Note that a Soules matrix $S\in \textit{R}^{n\times n}$ is an orthogonal matrix whose first column is positive, and that $SDS^T\ge 0$ for each nonnegative diagonal matrix 
 $D= diag(d_1,\ldots,d_n)$ where $d_1\ge \ldots \ge d_n$. Another example for the equality $\cprk(A)=rank(A)$ is when the graph associated with $A$ is a cycle \cite{X01}.  
 
 Obviously, for a diagonal CP matrix $A$, we have $rank(A) =\cprk (A)$.  However, no characterization is known (yet) for a general case.  \\
 \indent  The reason for us to consider this case is that this is an extreme case for a DN matrix to be CP when the geometric explanation can be applied. Specifically, this is equivalent to the following geometric problem:\\
 \indent  \emph{ Given a set of vectors $V=\set{v_1,\ldots,v_n}$ in vector space $\textit{R}^r$.  When can $V$ be rotated into the positive orthant of  $\textit{R}^r$ if  the angle between any pair of vectors in $V$ is smaller than or equal to $\pi/2$?} \\
 \indent Up to the best of our knowledge, this is still an open problem in geometry.  So it deserves us to investigate it.  The characterization of a CP matrix without the restriction on its \cprk is surely more complicate than this situation. But it can also be  reformulated into a geometric problem where the rotation shall be replaced by a transformation between two different vector spaces with different dimensions. Though the transformation between these the two vector spaces with different dimensions still possesses the orthogonality (it is isometric), the difficulty of the problem in this situation lies in that we even have no idea at all about the dimension of the space of the image of this transformation (there are some results concerning the upper bound of the dimensions of this image space).\\ 
\indent   We denote by $CP(n,r)$ the set consisting of all $n \times n$ CP matrices $A$ with $\cprk(A) = rank(A) = r$, and $DN(n,r)$
 the set of all $n \times n$ DN matrices with $rank(A)=r$,  $PSD(n,r)$ the set of $n \times n$ PSD (positive semidefinite) matrices with $rank(A)=r$. Note that all these three sets are empty when $r>n$ and consist of only zero matrix when $r=0$. Thus we always assume throughout the paper that $0<r\le n$.\\
 \indent It is obvious that $CP(n,r) \subseteq DN(n,r) \subseteq PSD(n,r)$ for all $n$ and $r$.  There are a number of known sufficient conditions for a DN matrix to be CP with equal cp-rank and rank. Most of them are based on small order, low rank, or special zero pattern. For example, it is shown that a Soules matrix $A$ is always in $CP(n,r)$, where $A$ is defined as the form $A=RDR^T$ where $R\in \textit{R}^{n\times n}$ is an orthogonal matrix and $D=diag(d_1,\ldots,d_n)$ is nonnegative diagonal whose diagonal elements are in decreasing order, i.e., $d_1\ge \ldots \ge d_n\ge 0$. \\ 
\indent   It is obvious that $CP(n,0)= DN(n,0), CP(n,1)= DN(n,1)$ because the outer product of two vectors is nonnegative if and only if both vectors can be set to be entrywise nonnegative. The equality $CP(n,2)= DN(n,2)$  was first observed and proved by Gray and Wilson \cite{gw}. An alternative to see this result is that vectors in $\textbf{R}^2$ with nonnegative pairwise  inner products can be simultaneously rotated to the first quadrant (see Corollary 2.4). \\
\indent   For $r\ge 3$,   the equality $CP(n,r)= DN(n,r)$ does not hold for general $n$ (see Example 1.2). 
     However, we have $CP(3,3)=DN(3,3)$. A proof of this result can be found in \cite[page 140]{bs}. 
     So far we have $CP(n,r)=DN(n,r)$ for $0 \le r \le n \le 3$. \\

\indent   On the other hand, if $A \in DN(n,r)$ and $G(A)$, the associated graph of $A$, is a tree,  then $A \in CP(n,r)$ \cite{bh}. 
      The next example  shows that the conditions of small order, low rank, and special zero pattern are essential for having equal   \cprk and rank.

 \begin{exm}
     Consider the matrix
      \[A =  \left[ \begin{array}{cccc} 2& 1&0&1\\1&2&1&0\\0&1&2&1\\ 1&0&1&2 \end{array} \right].\]
      Then $A \in DN(4,3)$. But $A \not \in CP(4,3)$, otherwise, $3= rank(A) =$ \cprk$(A) \ge 4$, a contradiction!
      The last inequality follows from the fact that $G(A)$ is a cycle of length 4 and Proposition 1.1.
      Indeed, $A \in CP_4$ with  \cprk$(A)=4$ by Kaykobad's result.
 \end{exm}

    \noindent Therefore, for bigger order, higher rank, or arbitrary zero pattern, it needs additional condition
    for a DN matrix to be CP with equal cp-rank and rank.

\section{Geometric Approach}

In this section,  we give a sufficient condition on the entries of a doubly nonnegative matrix $A$ of rank $r$ for $A$
to be completely positive with \cprk  equals $r$. The proof is based on the following consequence of Cauchy-Schwarz inequality:
 \[  ( z_1+\cdots +z_k )^2 \le k ( z_1^2+\cdots+z_k^2 )    \]
where $z_1, \ldots, z_k$ are real numbers.
Let  $e$ be the  vector in  $\textbf{R}^r$ with all entries equal to 1, and $\seq{\cdot, \cdot}$ be the standard inner product in $\textbf{R}^n$ where $n$ is any positive integer, and $\| \cdot \|$ be the Euclidean norm of a vector.   The next lemma shows that any vector with a small angle with $e$ also has nonnegative entries. 

\begin{lem}
Let $z \in  \textbf{R}^r$ be such that $\seq{z,e}\ge \sqrt{\frac{r-1}{r}} \| z \| \| e \|$.
Then $z \in  \textbf{R}_+^r$.
\end{lem}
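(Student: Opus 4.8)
The plan is to argue by contradiction, applying the stated Cauchy--Schwarz consequence to the $r-1$ coordinates other than a hypothetically negative one. First I would dispose of the degenerate cases. When $r=1$ the factor $\sqrt{(r-1)/r}$ vanishes and the hypothesis reads $z_1\ge 0$, which is exactly the conclusion. In general, since $\norm{e}=\sqrt{r}$, the hypothesis rewrites as $\seq{z,e}\ge\sqrt{r-1}\,\norm{z}$, which already forces $\seq{z,e}\ge 0$ (the right-hand side is a product of nonnegative quantities) and, upon squaring, $\seq{z,e}^2\ge(r-1)\norm{z}^2$. If $\seq{z,e}=0$ this gives $\norm{z}=0$, so $z=0\in\textbf{R}_+^r$, and we may henceforth assume $\seq{z,e}>0$.

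Writing $z=(z_1,\ldots,z_r)$ and $S=\seq{z,e}=z_1+\cdots+z_r$, suppose toward a contradiction that some entry $z_j<0$. I would apply the displayed inequality $(w_1+\cdots+w_k)^2\le k(w_1^2+\cdots+w_k^2)$ to the $k=r-1$ numbers $\{z_i:i\ne j\}$, obtaining
\[
(S-z_j)^2=\Bigl(\sum_{i\ne j}z_i\Bigr)^2\le(r-1)\sum_{i\ne j}z_i^2=(r-1)\bigl(\norm{z}^2-z_j^2\bigr).
\]
On the other hand, since $z_j<0$ and $S\ge 0$ we have $S-z_j>S\ge 0$, so $(S-z_j)^2>S^2\ge(r-1)\norm{z}^2$. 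Chaining these two bounds yields $(r-1)\norm{z}^2<(r-1)\bigl(\norm{z}^2-z_j^2\bigr)$, i.e.\ $z_j^2<0$, which is absurd. Hence every $z_j\ge 0$ and $z\in\textbf{R}_+^r$.

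The only delicate points are bookkeeping rather than genuine obstacles: one must confirm $S\ge 0$ before squaring the strict inequality $S-z_j>S$ (guaranteed by the hypothesis as soon as $\sqrt{(r-1)/r}\ge 0$), and one must split off the cases $r=1$ and $S=0$ so that $(S-z_j)^2>S^2$ is legitimate. The heart of the matter is the observation that deleting a single negative coordinate strictly \emph{increases} the coordinate sum while the Cauchy--Schwarz bound involves only the \emph{decreased} sum of squares $\norm{z}^2-z_j^2$; these two monotonicities collide, and that collision is exactly what rules out a negative entry.
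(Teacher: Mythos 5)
Your proof is correct, and it takes a genuinely different --- and leaner --- route than the paper's. The paper sorts the coordinates, splits the sum into its nonnegative block $z_1,\ldots,z_k$ and negative block $z_{k+1},\ldots,z_r$, uses $(a+b)^2\le a^2+b^2$ (valid because the cross term is $\le 0$) together with \emph{two} applications of the Cauchy--Schwarz consequence with constants $k$ and $r-k$, and then compares coefficients to force $k=1$, $r=2$, a residual case it kills by hand via $z_1+z_2\ge\sqrt{z_1^2+z_2^2}$. You instead delete a single hypothetically negative coordinate $z_j$ and apply Cauchy--Schwarz once, with constant $r-1$, to the surviving coordinates; the collision you identify --- the sum strictly increases to $S-z_j$ while the sum of squares drops to $\norm{z}^2-z_j^2$ --- yields $z_j^2<0$ in one stroke, with no sorting, no block decomposition, and no residual case beyond the trivial $r=1$. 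Your route also makes the sharpness of the constant more transparent: equality in your single Cauchy--Schwarz application forces the $r-1$ surviving coordinates to be equal, which is exactly the shape of the extremal vector in Example 2.2. One small quibble: your stated reason for splitting off $S=0$ is unnecessary, since $(S-z_j)^2>S^2$ is already legitimate whenever $S\ge 0>z_j$ (indeed $(S-z_j)^2-S^2=(-z_j)(2S-z_j)>0$), so that case distinction is redundant, though harmless.
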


\begin{proof}
Without loss of generality, we assume 
$z=[ z_1, z_2, \ldots, z_r ]^T$ where $z_1 \ge z_2 \ge \cdots \ge z_r$. Then the hypothesis is
\[ z_1 + z_2 + \cdots + z_r \ge \sqrt{(r-1)(z_1^2+ \cdots + z_r^2)} \ge 0, \]
and so
\[ (z_1 + z_2 + \cdots + z_r)^2 \ge  (r-1)(z_1^2+ \cdots + z_r^2).\]
Now suppose to the contrary that there exists $1 \le k \le r-1$ such that $z_1 \ge z_2 \ge z_k \ge 0 > z_{k+1} \ge \cdots \ge z_r$.
Hence, $z_1 + z_2 + \cdots + z_k \ge 0$ and $-(z_{k+1} + \cdots +z_r) >0$. It follows from the inequality above that
\begin{eqnarray*}
  (z_1+\cdots + z_k +z_{k+1} + \cdots +z_r )^2
& \le & ( z_1+\cdots + z_k )^2 + (z_{k+1} + \cdots +z_r)^2 \\
& \le & k(z_1^2+\cdots+z_k^2) + (r-k) (z_{k+1}^2+\cdots+z_r^2).
\end{eqnarray*}
Combining with the hypothesis, we have
\[ (r-1)(z_1^2 + \cdots + z_r^2 ) \le  k(z_1^2+\cdots+z_k^2) + (r-k) (z_{k+1}^2+\cdots+z_r^2), \]
and so
\[ (r-1-k) (z_1^2+\cdots+z_k^2) + (r-1-r+k) (z_{k+1}^2+\cdots+z_r^2) \le 0. \]
Because of $z_{k+1} <0 $ and $z_1 + z_2 + \cdots + z_r \ge 0$, it follows that
$r-1-k=0$ and $r-1-r+k=0$, i.e., $k=1$ and $r=2$.
Consequently,  we have $z_1 \ge  0 > z_2$, and so $z_1 z_2 < 0$.
On the other hand, the hypothesis gives $z_1+z_2 \ge \sqrt{z_1^2 + z_2^2}$,
and so $2 z_1 z_2 \ge 0$, a contradiction! 
\end{proof}

\begin{exm}
This example shows that the parameter $\sqrt{\frac{r-1}{r}}$ is optimal.
For  $c < \sqrt{\frac{r-1}{r}}$, choose small $\eps >0$ such that
$-\eps +\sqrt{r-1} \sqrt{1-\eps^2} > c \sqrt{r}$. Take
\[ z=[ -\eps \sqrt{r-1}, \sqrt{1-\eps^2}, \cdots, \sqrt{1-\eps^2} ] \notin \textbf{R}_+^r  \]
and so
\begin{equation}
\seq{z,e} = - \eps \sqrt{r-1} + (r-1) \sqrt{1-\eps^2} =\sqrt{r-1} ( -\eps+ \sqrt{r-1} \sqrt{1-\eps^2}) > c \norm{z} \norm{e}
\end{equation}
\end{exm}

\begin{lem}
Given vectors $\beta_i \in \textbf{R}^r$.
If there exists a nonzero vector $x \in \textbf{R}^r$ such that
\[ \seq{\be_i, \; x} \ge \sqrt{\frac{r-1}{r}}  \norm{\be_i}\norm{x}  \]
for all $i$ then there exists an orthogonal matrix $Q$ such that $Q \beta_i \ge 0$, i.e.,
$Q\beta_i$ is a nonnegative vector.
\end{lem}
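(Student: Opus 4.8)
The plan is to reduce this statement to the already-proved Lemma 2.1 by rotating the common vector $x$ onto the direction of the all-ones vector $e$. The key observation is that an orthogonal transformation preserves both inner products and norms, so the hypothesis inequality is invariant if we apply a single orthogonal $Q$ to all of the $\be_i$ and to $x$ simultaneously. Consequently, if I can choose $Q$ so that $Qx$ points in the direction of $e$, then each $Q\be_i$ will satisfy the hypothesis of Lemma 2.1 with $e$ in the role of $x$, and that lemma will immediately deliver $Q\be_i \ge 0$.

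First I would construct an orthogonal matrix $Q$ with $Qx = \frac{\norm{x}}{\norm{e}}\, e$. Since $x \ne 0$, both $x/\norm{x}$ and $e/\norm{e}$ are unit vectors in $\textbf{R}^r$, and the orthogonal group acts transitively on the unit sphere; concretely one may take a rotation in the plane spanned by $x$ and $e$ fixing the orthogonal complement, or a suitable Householder reflection. Either construction produces such a $Q$.

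Next I would verify the required inequality for the rotated vectors. Because $Q$ is orthogonal we have $\seq{Q\be_i, Qx} = \seq{\be_i, x}$ and $\norm{Q\be_i} = \norm{\be_i}$. Writing $e = \frac{\norm{e}}{\norm{x}}\, Qx$ and invoking the hypothesis, the positive factor $\norm{e}/\norm{x}$ preserves the inequality direction and yields
\[ \seq{Q\be_i,\; e} = \frac{\norm{e}}{\norm{x}}\seq{\be_i,\; x} \ge \frac{\norm{e}}{\norm{x}}\sqrt{\frac{r-1}{r}}\,\norm{\be_i}\norm{x} = \sqrt{\frac{r-1}{r}}\,\norm{Q\be_i}\norm{e}. \]
This is precisely the hypothesis of Lemma 2.1 applied to $z = Q\be_i$, so each $Q\be_i \in \textbf{R}_+^r$, which is the desired conclusion.

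Because the entire argument rests on the invariance of the hypothesis under orthogonal transformations together with the transitivity of the orthogonal group on the sphere, there is essentially no hard step here: the substantive work was carried out in Lemma 2.1. The only point needing minor care is the existence of the rotation $Q$ sending $x$ to a positive multiple of $e$, which is standard linear algebra.
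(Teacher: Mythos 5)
Your proof is correct and follows essentially the same route as the paper: both send $x$ to the positive multiple $\frac{\norm{x}}{\sqrt{r}}\,e$ of $e$ via an orthogonal map (the paper builds a Householder reflection explicitly, with the identity handling the degenerate case you cover by appealing to transitivity of the orthogonal group), then use invariance of inner products and norms to invoke Lemma 2.1. No gaps; the only difference is that the paper carries out the reflection computation in detail where you cite it as standard.
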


\begin{proof}
Case 1: $x$ is a positive multiple of $e$, i.e., $x=\frac{\|x\|}{\sqrt{r}} e$.
Then take $Q$ to be the identity matrix $I$, and we have $Qx=x=\frac{\|x\|}{\sqrt{r}} e$.

Case 2:  $x$ is not a positive multiple of $e$. Then $v=x - \frac{\|x\|}{\sqrt{r}} e \ne 0$.
Take $Q=I-\frac{2}{v^Tv} v v^T$. Hence, $Q$ is orthogonal since
\[  Q^T Q = Q^2 = \left( I-\frac{2}{v^Tv} v v^T \right)^2 = I - \frac{4}{v^Tv} v v^T + \frac{4}{(v^Tv)^2 } v v^Tv v^T =  I - \frac{4}{v^Tv} v v^T + \frac{4}{v^Tv} v v^T =I. \]
Moreover  $v^Tx = x^Tx - \frac{\|x\|}{\sqrt{r}} e^Tx $ and
\begin{eqnarray*} v^Tv &=& \left(x^T - \frac{\|x\|}{\sqrt{r}} e^T \right) \left(x - \frac{\|x\|}{\sqrt{r}} e \right) \\
&=& x^Tx-\frac{ 2 \|x\|}{\sqrt{r}} e^Tx + \frac{\|x\|^2}{r} e^Te \\
&=& 2 \left( x^Tx -  \frac{\|x\|}{\sqrt{r}} e^Tx \right) \\
&=& 2 v^Tx.
\end{eqnarray*}
And so 
\begin{eqnarray*} 
Qx & = & x-\frac{2}{v^Tv} v v^Tx \\
      & = & v+\frac{\|x\|}{\sqrt{r}} e -\frac{2}{v^Tv} v v^Tx \\
      & = & \left (1-\frac{2}{v^Tv} v^Tx \right ) v + \frac{\|x\|}{\sqrt{r}} e\\
      & = & \frac{\|x\|}{\sqrt{r}} e.
\end{eqnarray*}
Finally, in both cases, we have
\[ \frac{\langle Q \beta_i,  e \rangle}{\| Q \beta_i\| \| e\| } =
\frac{\langle Q \beta_i, \frac{\|x\|}{\sqrt{r}} e \rangle}{\| Q \beta_i\| \|\frac{\|x\|}{\sqrt{r}} e\| }
= \frac{\langle Q \beta_i,  Q x \rangle}{\| Q \beta_i\| \| Qx\| }
= \frac{\langle  \beta_i,   x \rangle}{\|  \beta_i \| \|  x  \|}
\ge \sqrt{ \frac{r-1}{r} },  \]
and so $Q \beta_i \ge 0$  by Lemma 2.1. 
\end{proof} 

\begin{cor} Let $A \in DN(n,2)$. Then $A \in CP(n,2)$.
\end{cor}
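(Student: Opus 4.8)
The plan is to realize $A$ through a rank-two Gram factorization and then invoke Lemma 2.2 with $r=2$. Since $A \in DN(n,2)$ is positive semidefinite of rank $2$, it admits a factorization $A = B^T B$ with $B = [\beta_1\; \beta_2\; \cdots\; \beta_n] \in \textbf{R}^{2\times n}$, where each column $\beta_i$ is a planar vector; this is possible precisely because $rank(A)=2$. Entrywise nonnegativity of $A$ translates into $\seq{\beta_i,\beta_j} = a_{ij} \ge 0$ for all $i,j$, so every pair of the vectors $\beta_i$ subtends an angle of at most $\pi/2$. Because $rank(A)=2$, at least one $\beta_i$ is nonzero, and any zero columns may be set aside since they impose no constraint in what follows.

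The geometric heart of the argument is the claim that finitely many nonzero vectors in $\textbf{R}^2$ with pairwise nonnegative inner products all lie in a single closed sector of angular width at most $\pi/2$. To establish this I would fix one vector, say $\beta_1$, and record the signed angle $\psi_i \in [-\pi/2,\pi/2]$ of each $\beta_i$ relative to $\beta_1$; the condition $\seq{\beta_1,\beta_i}\ge 0$ forces $|\psi_i| \le \pi/2$, so all the vectors sit in one half-plane. Setting $\psi_{\min} = \min_i \psi_i$ and $\psi_{\max} = \max_i \psi_i$, the angle between any $\beta_i$ and $\beta_j$ inside this half-plane is exactly $|\psi_i-\psi_j|$, so applying the pairwise constraint to the two extreme vectors gives $\psi_{\max} - \psi_{\min} \le \pi/2$. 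Hence all the $\beta_i$ lie in the sector from $\psi_{\min}$ to $\psi_{\max}$, of width at most $\pi/2$.

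With the sector in hand I would take $x$ to be the nonzero vector pointing along the bisecting direction, i.e. the angle $\frac{1}{2}(\psi_{\min}+\psi_{\max})$ relative to $\beta_1$. Then the angle $\theta_i$ between $x$ and any nonzero $\beta_i$ is at most $\frac{1}{2}(\psi_{\max}-\psi_{\min}) \le \pi/4$, so
\[ \seq{\beta_i, x} = \norm{\beta_i}\,\norm{x}\cos\theta_i \ge \frac{1}{\sqrt{2}}\,\norm{\beta_i}\,\norm{x} = \sqrt{\frac{r-1}{r}}\,\norm{\beta_i}\,\norm{x} \]
for $r=2$, while zero columns satisfy this trivially as $0 \ge 0$. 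This is exactly the hypothesis of Lemma 2.2.

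Applying Lemma 2.2 then yields an orthogonal matrix $Q$ with $Q\beta_i \ge 0$ for every $i$. Setting $B' = QB$, the matrix $B'$ is entrywise nonnegative and $(B')^T B' = B^T Q^T Q B = B^T B = A$, so $A$ is completely positive with a factorization of inner dimension $2$; thus \cprk$(A) \le 2$. Combining this with the general inequality $rank(A) \le$ \cprk$(A)$ and $rank(A)=2$ forces \cprk$(A)=2$, that is, $A \in CP(n,2)$. I expect the main obstacle to be the planar geometry claim in the second paragraph: one must argue carefully about the wraparound of angles and confirm that the maximal pairwise angle is realized by the extreme pair, since it is this fact that confines all the $\beta_i$ to a quarter-plane and supplies the vector $x$ required by Lemma 2.2.
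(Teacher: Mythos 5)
Your proof is correct and follows essentially the same route as the paper: factor $A=B^TB$ with $B\in\textbf{R}^{2\times n}$, take $x$ along the bisector of the extremal pair of column vectors so that every column makes angle at most $\pi/4$ with $x$, and invoke the rotation lemma (the paper's Lemma 2.3, not 2.2 as you cite it) to get an orthogonal $Q$ with $QB\ge 0$, whence \cprk$(A)=rank(A)=2$. The only difference is that you carefully justify the planar sector-of-width-$\pi/2$ claim that the paper asserts in one line, which is a welcome addition rather than a deviation.
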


\begin{proof} Since $A$ is a positive semidefinite matrix of rank $2$, we have $A=B^TB$ for some 
$B=[v_1, v_2, \ldots ,v_n]\in \textbf{R}^{2\times n}$.  Moreover $A$ is nonnegative, and so 
$v_i^Tv_j =\seq{v_i, v_j}\ge 0$. Take $x\in \textbf{R}^2$ to be the angle bisector of the largest angle among all pairs of vectors.
Then $\seq{v_i, x}\ge \sqrt{\frac{1}{2}} \|v_i\| \|x\|$ for all $i$. By Lemma 2.3, there exists orthogonal $Q$  such that $Q v_i \ge 0$.
 Consequently, $A=B^TB=(QB)^T(QB)$ is CP with \cprk($A$)=2. 
 \end{proof}
 
\begin{thm}
Let $A=[a_{ij}] \in PSD(n,r)$, and denote $R_i$ the $i$-th row sum of $A$. If
\[ r R_i^2 \ge (r-1) a_{ii} (R_1+\cdots+ R_n)\] for
all $i$  then $A \in CP(n,r)$.
\end{thm}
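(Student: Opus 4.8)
The plan is to deduce the theorem from Lemma 2.3 by producing a single vector that makes a suitably small angle with every column of a Gram factor of $A$. Since $A\in PSD(n,r)$, factor $A=B^{T}B$ with $B=[\beta_{1}\;\beta_{2}\;\cdots\;\beta_{n}]$ and $\beta_{i}\in\textbf{R}^{r}$, so that $a_{ij}=\seq{\beta_{i},\beta_{j}}$ and in particular $a_{ii}=\norm{\beta_{i}}^{2}$. The natural candidate is $x=\sum_{j=1}^{n}\beta_{j}$, because then the row sums and their total are read off as inner products against $x$: indeed $R_{i}=\sum_{j}a_{ij}=\seq{\beta_{i},x}$ and $R_{1}+\cdots+R_{n}=\seq{x,x}=\norm{x}^{2}$.

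With these identifications the hypothesis turns into
\[ r\,\seq{\beta_{i},x}^{2}\ge (r-1)\,\norm{\beta_{i}}^{2}\norm{x}^{2}\qquad \mbox{for all } i. \]
If this can be promoted to the \emph{signed} inequality $\seq{\beta_{i},x}\ge\sqrt{\frac{r-1}{r}}\,\norm{\beta_{i}}\norm{x}$, then Lemma 2.3 supplies an orthogonal $Q$ with $Q\beta_{i}\ge 0$ for every $i$, and the rest is immediate: $A=(QB)^{T}(QB)$ with $QB\in\textbf{R}^{r\times n}$ nonnegative gives $\cprk(A)\le r$, while $\cprk(A)\ge rank(A)=r$ always holds for a CP matrix, so $A\in CP(n,r)$.

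The crux, and the step I expect to be the real obstacle, is exactly this promotion from a squared to a signed inequality. Taking square roots above yields only $\abs{\seq{\beta_{i},x}}\ge\sqrt{\frac{r-1}{r}}\,\norm{\beta_{i}}\norm{x}$, i.e. $\abs{R_{i}}$ is large; to invoke Lemma 2.3 I must rule out the case $R_{i}<0$, in which $\beta_{i}$ makes an angle near $\pi$, rather than near $0$, with $x$. This is where nonnegativity of the entries is essential: for a doubly nonnegative $A$ each $R_{i}=\sum_{j}a_{ij}$ is a sum of nonnegative terms, hence $R_{i}\ge 0$, and the correct root then delivers the signed inequality. The same nonnegativity also disposes of the degenerate case $x=0$: if $\norm{x}^{2}=\sum_{k}R_{k}=0$ with all $a_{ij}\ge 0$, then $A=0$, contradicting $rank(A)=r\ge 1$, so $x\ne 0$ and Lemma 2.3 genuinely applies. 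I would therefore expect the argument to lean on the doubly nonnegative structure precisely at the sign step, since the squared form of the hypothesis by itself leaves the direction of $\beta_{i}$ relative to $x$ undetermined.
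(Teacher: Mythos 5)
Your proposal is correct and follows essentially the same route as the paper's proof: factor $A=B^{T}B$ with $B=[\beta_{1}\;\cdots\;\beta_{n}]$, take $x=\beta_{1}+\cdots+\beta_{n}=Be$, identify $\langle \beta_{i},x\rangle =R_{i}$, $\|\beta_{i}\|^{2}=a_{ii}$, $\|x\|^{2}=R_{1}+\cdots+R_{n}$, invoke Lemma 2.3 to obtain an orthogonal $Q$ with $QB\ge 0$, and conclude $A=(QB)^{T}(QB)$ with $r=rank(A)\le \cprk(A)\le r$. You are in fact more careful than the paper at the one delicate point you single out: the paper passes from the squared hypothesis directly to $R_{i}\big/\bigl(\sqrt{a_{ii}}\sqrt{R_{1}+\cdots+R_{n}}\bigr)\ge \sqrt{(r-1)/r}$, silently assuming $R_{i}\ge 0$ and $x\ne 0$, and these assumptions are exactly what your doubly nonnegative reading supplies (as the section's framing intends), since for a merely PSD matrix satisfying the stated squared inequality the signed inequality can fail.
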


\begin{proof}
Since $A$ is a positive semidefinite matrix of rank $r$, $A=B^TB$ where $B=[\be_1,\; \be_2, \ldots ,\be_n]$ and $\be_i \in \textit{R}^r$. Note that $\be_i = B e_i$ where $e_i$ is the $i$-th column of the $n \times n$ identity matrix. Take $x=\be_1 + \be_2 + \cdots +\be_n=Be_1+Be_2+\cdots+Be_n=
  B(e_1 + \cdots + e_n)=B e $.
Now \[ \langle \beta_i, x \rangle = \langle Be_i, Be\rangle = e_i^T B^T B e=e_i^T A e = R_i,\]
\[\| \beta_i \|^2 = \langle \beta_i, \beta_i \rangle=\langle Be_i, Be_i\rangle=e_i^T B^T B e_i=e_i^T A e_i = a_{ii},\]
and
\[ \| x \|^2= \langle x, x \rangle = \langle Be, Be  \rangle =e^T B^T B e = e^T A e = R_1+\cdots+R_n.\]
Hence, by hypothesis, \[ \frac{\langle \beta_i, x \rangle}{\| \beta_i \| \| x \|} = \frac{R_i}{\sqrt{a_{ii}} \sqrt{R_1+\cdots+R_n}} \ge \sqrt{\frac{r-1}{r}}. \]
By Lemma 2.3, there exists an $r \times r$  orthogonal matrix $Q$ such that $Q \beta_i \ge 0$,
and so $QB$ is an $r \times n$ nonnegative matrix.
Consequently, $A= B^T B = B^T Q^T Q B = (QB)^T (QB)$ is CP with  \cprk$(A) \le r$.
On the other hand, $r=rank(A) \le \cprk(A)$, so we have  $\cprk(A) =r$.  
\end{proof}

\begin{cor} Let $A \in PSD(n,3)$. If  $3 R_i^2 \ge 2 a_{ii} (R_1+\cdots R_n)$ for
all $i$, then $A \in CP(n,3)$.
\end{cor}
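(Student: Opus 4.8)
The plan is to recognize that this statement is nothing more than the specialization of Theorem 2.5 to the case $r=3$. First I would check that the data of the corollary line up with the data of the theorem under the substitution $r=3$: the ambient hypothesis $A \in PSD(n,3)$ is exactly the $r=3$ instance of $A \in PSD(n,r)$, and substituting $r=3$ into the general inequality $r R_i^2 \ge (r-1) a_{ii}(R_1+\cdots+R_n)$ produces precisely $3 R_i^2 \ge 2 a_{ii}(R_1+\cdots+R_n)$, which is the condition imposed here. Since $R_i$ denotes the $i$-th row sum in both statements, there is nothing to reconcile beyond reading off this numerical match.

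Having confirmed the correspondence, I would simply invoke Theorem 2.5 with $r=3$ to conclude that $A \in CP(n,3)$. There is no genuine obstacle to overcome: all the real work resides in Theorem 2.5, which itself rests on Lemma 2.3 (the existence of the rotating orthogonal matrix $Q$) and ultimately on the sharp angle bound of Lemma 2.1. The corollary merely isolates the most frequently useful low-rank instance, paralleling Corollary 2.4 for rank $2$, so that one may apply the criterion directly to rank-$3$ doubly nonnegative matrices without retracing the general argument.
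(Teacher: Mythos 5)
Your proposal is correct and matches the paper exactly: the corollary is stated there without a separate proof precisely because it is the immediate specialization of Theorem 2.5 to $r=3$, with the inequality $3R_i^2 \ge 2a_{ii}(R_1+\cdots+R_n)$ obtained by direct substitution. Nothing further is needed.
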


The next two examples show how  to use this sufficient condition to determine a given DN matrix of rank 3 to be CP with \textit{cp-rank} being 3.

\begin{exm}
   Consider matrix
\[ A = \left[ \begin{array}{cccc}
93  &  27   &  55   &  45 \\
    27   &  45    & 33  &   51\\
    55    & 33  &   41 &    43\\
    45    & 51   &  43  &   62
     \end{array} \right]. \]
It is easy to check that $A \in DN(4,3)$. Using the result of Maxfield and Minc \cite{mm} that any $4 \times 4$ DN matrix is CP (whose \cprk is less than or equal to 4), we know that $A$ is CP, but we don't know its exact \cprk.  However, by Corollary 2.6, we know that $A \in CP(4,3)$.
\end{exm}

\begin{exm}
Consider matrix
\[ A = \left[ \begin{array}{ccccc}
41   & 43  &  80  &  56 &   50 \\
    43  &  62  &  89  &  78 &   51\\
    80  &  89   &162  & 120  &  93\\
    56   & 78  & 120 &  104 &   62\\
    50  &  51  &  93   & 62  &  65
     \end{array} \right]. \]
Then $A \in DN(5,3)$ by simple calculations on $A$'s eigen-system. But the result of Maxifeld and Minc \cite{mm} cannot be applied. By Corollary 2.6, it can be easily checked that the sufficient condition is satisfied and so  $A \in CP(5,3)$.
\end{exm}

 \begin{rmk} Unfortunately, the sufficient condition in Theoem 2.5 is far from necessary. For example, the diagonal
 matrix $\left[ \begin{array}{cc} 100&0\\0&1 \end{array} \right] \in DN(2,2)=CP(2,2)$ but it fails the condition in Theorem 2.4.
 \end{rmk}
 
 \section{Nonnegative Equivalent Matrices}
 In this section, we first introduce \emph{nonnegative equivalent matrices}, which are a special kind of  doubly nonnegative matrices. We then show that a nonnegative equivalent DN matrix with rank three must be CP with its cp-rank equal to its rank. \\ 
\indent  Before we come to our main points, let us first state an already known result (see e.g. \cite{x}), by which we get the uniqueness of the minimal CP factorization for a matrix $A\in CP(n,r)$ in the sense of unitary transformation. Here by minimal CP factorization we mean the factorization $A=B^TB$ where $B\in \RPlus^{m\times n}$ with $m=\cprk(A)$. The following lemma is actually a special case of Lemma 2.1 of \cite{x}. We present here an alternative proof to this result in our situation. 
 
  \begin{lem}\label{le:le3.1}
 Let  $A=C^TC=B^TB$  where $B,C\in \textit{R}^{r\times n}$ and $r=rank(A)$. Then there exists an orthogonal matrix $Q\in \textit{R}^{m\times m}$ such that $C=QB$.
  \end{lem}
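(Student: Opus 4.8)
The plan is to construct the orthogonal matrix $Q$ explicitly and then verify the two defining properties. First I would record a full-rank observation: since $A=B^TB$ has rank $r$ and $B$ is $r\times n$, the matrix $B$ must have full row rank $r$, and likewise $C$ has full row rank $r$. (Here $m=r$, as the factorization is minimal.) Consequently the $r\times r$ Gram matrix $BB^T$ is invertible, which makes the natural candidate
\[ Q = C B^T (B B^T)^{-1} \]
well-defined. Everything then reduces to checking that $C=QB$ and that $Q^TQ=I$.

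For the identity $C=QB$, the key geometric fact is that $B$ and $C$ share the same null space. Indeed, for any $x\in \textbf{R}^n$ one has $\norm{Bx}^2 = x^T B^T B x = x^T A x = x^T C^T C x = \norm{Cx}^2$, so $Bx=0$ if and only if $Cx=0$; hence $\ker B = \ker C$. Now introduce $P = I - B^T(BB^T)^{-1}B$, the orthogonal projection onto $\ker B$. Since $BP = B - (BB^T)(BB^T)^{-1}B = 0$, every column of $P$ lies in $\ker B = \ker C$, so $CP=0$. Rearranging $CP = C - CB^T(BB^T)^{-1}B = C - QB = 0$ yields $C=QB$, as required.

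Orthogonality is then a direct computation using $C^T C = A = B^T B$:
\[ Q^T Q = (BB^T)^{-1} B\, C^T C\, B^T (BB^T)^{-1} = (BB^T)^{-1} B\, B^T B\, B^T (BB^T)^{-1} = (BB^T)^{-1} (BB^T)^2 (BB^T)^{-1} = I, \]
and since $Q$ is square of order $r$, the relation $Q^TQ=I$ already forces $Q$ to be orthogonal. The only genuinely substantive step is recognizing that $\ker B = \ker C$ is exactly what makes $CP=0$ and hence $C=QB$; once the candidate $Q=CB^T(BB^T)^{-1}$ is written down, the rest is routine algebra. I therefore expect no serious obstacle, the main point being the correct guess for $Q$ together with the null-space identification that validates it.
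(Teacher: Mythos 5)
Your proof is correct and is essentially the paper's own argument: the paper likewise constructs $Q$ explicitly from the Gram matrix, setting $Q=(BB^T)^{-1}BC^T$ (the transpose of your $Q$, yielding $B=QC$ rather than $C=QB$, which is the same statement since $Q$ is orthogonal), and verifies orthogonality by exactly your computation substituting $C^TC=B^TB$. The only difference is cosmetic: the paper derives the intertwining identity in one line, multiplying $B^TB=C^TC$ on the right by $B^T$ and then by $(BB^T)^{-1}$, whereas you reach it via $\ker B=\ker C$ and the projector $P=I-B^T(BB^T)^{-1}B$ --- an equivalent and equally rigorous step.
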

\begin{proof}
It is obvious that $rank(A)=rank(B)=rank(C)=r$. Now from the equality $B^TB=C^TC$, we get $B^TBB^T=C^TCB^T$. It follows that 
\[ B^T = C^TCB^T (BB^T)^{-1} \]
Here the $r\times r$ matrix $BB^T$ is invertible due to the fact $rank(BB^T)=rank(B)=r$. Thus we have $B=QC$ if we denote $Q=(BB^T)^{-1}BC^T$.  It suffices to prove that $Q$ is an orthogonal matrix. In fact, we have   
 \begin{eqnarray*}
 QQ^T = & [(BB^T)^{-1}BC^T][CB^T(BB^T)^{-1}]\\
            = & (BB^T)^{-1}B (C^TC) B^T(BB^T)^{-1} \\
            = &  (BB^T)^{-1}B (B^TB) B^T(BB^T)^{-1}=I_r
   \end{eqnarray*}
where $I_r$ is the identity matrix of order $r$. \\
\end{proof}

\begin{lem}\label{le: le3.2}
  For $r=1,2,3,4$, $DN(r,r)=CP(r,r)$. 
  \end{lem}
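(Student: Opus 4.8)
The plan is to use the universal containment $CP(n,r) \subseteq DN(n,r)$, so that it suffices to establish the reverse inclusion $DN(r,r) \subseteq CP(r,r)$ for each $r \in \{1,2,3,4\}$. Concretely, given a full-rank $A \in DN(r,r)$, I must exhibit a nonnegative factorization $A = B^TB$ with $B \in \RPlus^{r \times r}$, which then forces $\cprk(A) = r = rank(A)$ and places $A$ in $CP(r,r)$.

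The cases $r = 1, 2, 3$ require no new argument; they follow by specializing results already recorded in the introduction to $n = r$. For $r = 1$ and $r = 2$ the equalities $CP(n,1) = DN(n,1)$ and $CP(n,2) = DN(n,2)$ (the latter being Corollary 2.4) hold for all $n$, hence in particular at $n = r$. For $r = 3$, even though $CP(n,3) = DN(n,3)$ fails for general $n$ (Example 1.2), the instance $CP(3,3) = DN(3,3)$ is exactly the case $n = r = 3$ cited from \cite{bs}.

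The only genuinely new case is $r = 4$, and here I would invoke the theorem of Maxfield and Minc \cite{mm} (already used in Example 2.7): every $4 \times 4$ doubly nonnegative matrix is completely positive with $\cprk \le 4$. Thus any $A \in DN(4,4)$ satisfies $\cprk(A) \le 4$. On the other hand, the universal bound $rank(A) \le \cprk(A)$ together with the hypothesis $rank(A) = 4$ gives $\cprk(A) \ge 4$. The two inequalities pinch $\cprk(A) = 4 = rank(A)$, so $A \in CP(4,4)$.

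I do not expect a serious obstacle: the argument is short precisely because the condition $n = r$ collapses the rank sandwich $r = rank(A) \le \cprk(A) \le r$. The only thing each case needs is an external guarantee that an $r \times r$ full-rank DN matrix is CP with cp-rank not exceeding $r$ — supplied by Gray--Wilson / Corollary 2.4 for $r \le 2$, by \cite{bs} for $r = 3$, and by Maxfield--Minc for $r = 4$. The reason the statement stops at $r = 4$ is that no comparably clean upper bound $\cprk \le r$ for $r \times r$ DN matrices is available once $r \ge 5$, so the pinching argument can no longer be closed by this elementary route.
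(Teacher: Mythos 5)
Your proposal is correct and matches the paper's own proof essentially verbatim: both handle $r\le 3$ by citing the known equalities $CP(n,1)=DN(n,1)$, $CP(n,2)=DN(n,2)$, and $CP(3,3)=DN(3,3)$, and both settle $r=4$ by the Maxfield--Minc theorem combined with the pinching $4=rank(A)\le \cprk(A)\le 4$. No gaps to report.
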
 
  
\begin{proof}
It is easy to see that $DN(n,1)=CP(n,1), DN(n,2)=CP(n,2)$ and $DN(3,3)=CP(3,3)$ (see e.g. [2]).
Thus it suffices to prove $DN(4,4)=CP(4,4)$. It is obvious that $CP(4,4)\subseteq DN(4,4)$ by definition of $CP(n,r)$. Now we 
consider $ A \in DN(4,4)$. By Minc and Maxfield [\cite{mm}], $A$ is CP with  $\cprk(A) \le 4$,
hence $4 =rank(A) \le  \cprk(A) \le 4$. So $\cprk(A)=rank(A)=4$.  It follows that $ A \in CP(4,4)$ which implies 
$DN(4,4)\subseteq CP(4,4)$. Consequently, we have  $CP(4,4)=DN(4,4)$.
\end{proof} 
 
 \begin{exm}\label{ex: ex3.3}  This example shows that $CP(5,5) \ne DN(5,5)$. 
 We take $v_1 =\left[ \begin{array}{c} 1\\0\\1\\0\\0 \end{array} \right]$,
 $v_2 =\left[ \begin{array}{c} 1\\0\\0\\1\\0 \end{array} \right]$,
 $v_3 =\left[ \begin{array}{c} 1\\0\\0\\0\\1 \end{array} \right]$,
 $v_4 =\left[ \begin{array}{c} 0\\1\\1\\0\\0 \end{array} \right]$,
 $v_5 =\left[ \begin{array}{c} 0\\1\\0\\1\\0 \end{array} \right]$,
 $v_6 =\left[ \begin{array}{c} 0\\2\\0\\0\\1 \end{array} \right]$,
 and $A =\sum_{i=1}^6 v_i v_i^T =
 \left[ \begin{array}{ccccc} 3&0&1&1&1\\
 0&6&1&1&2\\1&1&2&0&0\\1&1&0&2&0\\1&2&0&0&2 \end{array} \right]$.
 Since $\det(A)=4 \ne 0$,
 and so $A \in DN(5,5)$.
 Let $k=cp\mbox{-}rank(A)$. Since $A=\sum_{i=1}^6 v_iv^T_i$, $k \le 6$ because $v_i \ge 0$.
 On the other hand, if $A=\sum_{i=1}^k b_ib^T_i$ for some $b_i \ge 0$.
 Then $G(b_ib^T_i)$ forms a clique in $G(A)$,  and $G(A)$ has $K_1$ and $K_2$ as the only cliques,
 hence $G(b_ib^T_i)$ is a vertex or an edge of $G(A)$. Since $G(A)=K_{2,3}$ has 6 edges, $k \ge 6$.
 Consequenlty $k=6$, and so $A \not \in CP(5,5)$.   \BBox
 \end{exm}   

\begin{lem}\label{le: le3.4}
For any positive integer $r=1,2,3,4$, let $\{b_i: i=1,...,r\} \subseteq R^n$ ($n\ge r$) satisfy $\seq{b_i, b_j }\ge 0$ for all $1\le i,j\le r$. 
Then there exists an orthogonal matrix $Q\in R^{n\times n}$ such that $Qb_i \in \RPlus^n$ for all $1\le i\le r$.
\end{lem}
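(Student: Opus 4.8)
The plan is to reduce the statement to the complete positivity of a small doubly nonnegative matrix and then exploit the rigidity of Gram representations. First I would form the Gram matrix $A=[\seq{b_i,b_j}]_{i,j=1}^r$. Since $A=B^TB$ for $B=[b_1\;\cdots\;b_r]\in\textbf{R}^{n\times r}$, it is positive semidefinite, and by hypothesis it is entrywise nonnegative; hence $A$ is a DN matrix of order $r\le 4$. By the theorem of Maxfield and Minc \cite{mm} (together with the elementary cases $r\le 3$ recorded in the proof of Lemma \ref{le: le3.2}), every DN matrix of order at most $4$ is completely positive. Therefore $A=C^TC$ for some nonnegative $C=[c_1\;\cdots\;c_r]\in\RPlus^{m\times r}$, where $m=\cprk(A)\le r\le n$ (the maximal cp-rank of an order-$r$ matrix being $r$ for $r\le 4$). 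Padding each $c_i$ with $n-m$ zero coordinates produces vectors $\hat c_i\in\RPlus^n$ with $\seq{\hat c_i,\hat c_j}=\seq{c_i,c_j}=\seq{b_i,b_j}$ for all $i,j$.

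At this stage I would have two families $b_1,\dots,b_r$ and $\hat c_1,\dots,\hat c_r$ in $\textbf{R}^n$ with identical Gram matrices, the second family being nonnegative. It then remains to produce an orthogonal $Q$ carrying the first family onto the second, since $Qb_i=\hat c_i\ge 0$ would finish the proof. To build $Q$ I would define a linear map $T$ on $\mathrm{span}\{b_i\}$ by $T\bigl(\sum_i\al_i b_i\bigr)=\sum_i\al_i\hat c_i$. Equality of the Gram matrices makes $T$ well defined and norm preserving, because $\norm{\sum_i\al_i\hat c_i}^2=\sum_{i,j}\al_i\al_j\seq{b_i,b_j}=\norm{\sum_i\al_i b_i}^2$; in particular $\mathrm{span}\{b_i\}$ and $\mathrm{span}\{\hat c_i\}$ share the same dimension $k=rank(A)$. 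Extending $T$ by any linear isometry between the $(n-k)$-dimensional orthogonal complements yields the desired orthogonal $Q\in\textbf{R}^{n\times n}$.

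I expect two points to deserve the most care. The first, and the only place where the restriction $r\le 4$ is genuinely used, is the appeal to complete positivity of $A$: for $r\ge 5$ there exist DN matrices that are not CP (compare Example \ref{ex: ex3.3}), so no nonnegative Gram-equivalent family $\hat c_i$ need exist and the reduction collapses. The second, which is the main technical obstacle, is the construction of $Q$: it is the natural extension of Lemma \ref{le:le3.1} to factorizations that are not of minimal row dimension, but here $B$ need not have full row rank, so $BB^T$ may be singular and the explicit inverse formula used in Lemma \ref{le:le3.1} is unavailable. This is precisely why I would route the argument through the isometry-extension construction above rather than through the pseudo-inverse.
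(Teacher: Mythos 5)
Your proposal is correct and takes essentially the same route as the paper: both form the Gram matrix $A=B^TB$, invoke complete positivity of DN matrices of order $r\le 4$ (Maxfield--Minc together with the small cases) to obtain a nonnegative factor, zero-pad it into $\textbf{R}^n$, and then carry $\{b_i\}$ onto the nonnegative family by an orthogonal matrix. The only divergence is that the paper outsources this last step to Lemma 2.1 of \cite{x}, whereas you prove it inline via the isometry-extension argument --- correctly observing that Lemma \ref{le:le3.1} of the paper does not apply since $BB^T$ may be singular, and your version even handles linearly dependent $b_i$, where the paper's assertion $A\in DN(r,r)$ is slightly too strong.
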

 
\begin{proof} 
Let $B=[b_1,\ldots,b_r]\in R^{n\times r}$. Then $A=B^TB \in DN(r,r) $. But  $DN(r,r) = CP(r,r)$ for $r\le 4$ by Lemma 3.2, so  there is a nonnegative $r \times r$ matrix $C_1$ such that $A=C_1^TC_1$ (see e.g. \cite{mm}).  Now we let $C^T=[C_1^T,0]$ where the size of the 0 is $r\times (n-r)$. Thus $C\in \RPlus^{n\times r}$ and $A=B^TB =C^TC$.  By Lemma 2.1 of \cite{x}, we can find an orthogonal matrix $Q\in R^{n\times n}$ such that $C=QB$, i.e., $Qb_i \ge 0$ for all $i$.
 \end{proof} 
 
\indent  We call an $r\times n$ ($r=rank(B)\le n$) matrix $B$ a \emph{Non-Negative eQuivalent} (abbr. \emph{nnq}) if there is a
 nonsingular submatrix $B_{1}\in  \textit{R}^{r\times r}$ such that $B_{1}^{-1} B\ge 0$.  An  $n\times n$ symmetric matrix $A$ 
 with $rank(A)=r$ is called \emph{nnq} if $A$ has an $r\times n$ nnq submatrix. \\
  
\begin{thm}\label{thm: th3.5} 
Let $A \in DN(n,r)$ and $A=B^TB$ where $B\in R^{r \times n}$ has rank $r$.
 
 (i) If $r=3$ and $B$ is nnq, then $A \in CP(n,3)$.
 
 (ii) If $r=4$ and $B$ is nnq,  then $A \in CP(n,4)$.
 \end{thm}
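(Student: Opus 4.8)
The plan is to exploit the \emph{nnq} factorization to reduce the whole problem to rotating a single $r\times r$ block. Let $B_1\in \textit{R}^{r\times r}$ be the nonsingular submatrix guaranteed by the \emph{nnq} hypothesis, and set $M=B_1^{-1}B$, so that $M\ge 0$ by definition of \emph{nnq} and $B=B_1M$. The crucial observation is that it suffices to rotate only the columns of $B_1$ into the nonnegative orthant: if I can produce an orthogonal matrix $Q$ with $QB_1\ge 0$, then $QB=QB_1M\ge 0$ automatically, being a product of two entrywise nonnegative matrices. Thus the hard part is not all of $B$, but only the $r\times r$ corner.

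To produce such a $Q$, I would first note that the $r$ columns of $B_1$ are vectors in $\textit{R}^r$ whose pairwise inner products are exactly the corresponding entries of $A=B^TB$, hence are nonnegative because $A$ is doubly nonnegative. For $r=3$ and $r=4$ this is precisely the hypothesis of Lemma~\ref{le: le3.4}, applied with its ambient dimension taken equal to $r\le 4$ and with the $b_i$ being the columns of $B_1$. That lemma then furnishes an orthogonal matrix $Q\in \textit{R}^{r\times r}$ with $Qb_i\ge 0$ for every $i$, i.e. $QB_1\ge 0$, which is exactly what the reduction needs.

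With $Q$ in hand, the conclusion is routine: $QB$ is an $r\times n$ entrywise nonnegative matrix and $A=B^TB=(QB)^T(QB)$ is a nonnegative factorization using only $r$ rows, so $\cprk(A)\le r$. Combined with the general bound $\cprk(A)\ge rank(A)=r$, this gives $\cprk(A)=r$, that is, $A\in CP(n,r)$. The identical argument proves both (i) and (ii), the only difference being the value of $r$ fed into Lemma~\ref{le: le3.4}.

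The only genuine content is concentrated in the appeal to Lemma~\ref{le: le3.4}, and this is exactly where the restriction to $r\in\{3,4\}$ matters: that lemma ultimately rests on the equality $DN(r,r)=CP(r,r)$, which holds for $r\le 4$ by Lemma~\ref{le: le3.2} but already fails at $r=5$ by Example~\ref{ex: ex3.3}. I therefore expect the main obstacle to lie not in the reduction $B=B_1M$, which is elementary, but in the availability of a clean rotation theorem for the $r\times r$ block; the theorem stops at $r=4$ precisely because that is the largest order for which such a rotation is guaranteed.
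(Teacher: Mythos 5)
Your proposal is correct and follows essentially the same route as the paper's own proof: both isolate the nonsingular block $B_1$ with $P=B_1^{-1}B\ge 0$, apply Lemma~3.4 (resting on $DN(r,r)=CP(r,r)$ for $r\le 4$) to rotate the columns of $B_1$ into $\textbf{R}_+^r$, conclude $QB=QB_1P\ge 0$, and finish with the sandwich $r=rank(A)\le\cprk(A)\le r$. Your closing remark about why the theorem stops at $r=4$ (Example~3.3 showing $CP(5,5)\ne DN(5,5)$) is an accurate reading of the obstruction, matching the paper's structure.
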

 
 \begin{proof}
  (i) Write $B=[b_1, b_2, \ldots, b_n]$.  Since $A\in DN(n,3)$, we have $rank(B)=rank(A)=3$. Now $B$ is nnq,  we may assume w.l.o.g. that  $B_1=[b_1, b_2, b_3]$ is nonsingular and satisfies $B_1^{-1} B\ge 0$. Thus vectors $b_1, b_2, b_3$ 
 are linearly independent and $\seq{b_i, b_j}\ge 0$.  By Lemma 3.4, there exists an orthogonal matrix $Q\in R^{3\times 3}$ such  that $Qb_1, Qb_2, Qb_3 \in \RPlus^{3}$. Now we denote $P=B_1^{-1} B$, and so $QB=QB_1P \in   \RPlus^{3\times n}$.
 Consequently, $A=B^TB=B^TQ^TQB=C^TC$ where $C=QB \ge 0$, and so $3=rank(A) \le cp\mbox{-}rank(A) \le 3$,
 i.e., $A \in CP(n,3)$. 
 
 (ii)  We may assume that the submatrix $B_1=[b_1, b_2, b_3, b_4]\in \textit{R}^{4\times 4}$ is nonsingular and satisfies condition $P\equiv B_1^{-1} B\ge 0$. Then $\{b_1, b_2, b_3, b_4 \} \subseteq R^4$ are linearly independent and $\seq{b_i, b_j} \ge 0$,
 by Lemma 3.4, there exists orthogonal matrix $Q\in R^{4\times 4}$ such  that $Qb_1, Qb_2, Qb_3, Qb_4 \in \RPlus^4$.
Thus $QB=QB_1P= [Qb_1, Qb_2, Qb_3, Qb_4]  P \ge 0$.
 Consequently, $A=B^TB=B^TQ^TQB=C^TC$ where $C=QB \ge 0$, and so $4=rank(A) \le cp\mbox{-}rank(A) \le 4$,
 i.e., $A \in CP(n,4)$.  \BBox\\
\end{proof} 
 
 \indent  The argument in the proof of Theorem 3.5  implies a method for constructing CP factorizations of a matrix $A$ in $CP(n,3)$  or $CP(n,4)$ when $A$ satisfies the hypothesis. We will present an algorithm for such a factorization in this special case. \\
 \indent For a given matrix $A\in DN(n,r)$,  a \emph{symmetric rank factorization}, or briefly, a \textit{SR} factorization of $A$,  is a factorization $A=BB^T$, where $B\in \textit{R}^{n\times r}, r = rank(B) = rank(A)$. $B$ is accordingly called a \emph{SR-factor} of $A$.  The following lemma shows that if  $A\in DN(n,r)$ has a nnq SR-factor, then all the SR-factors of $A$ are nnq. This lemma will be used to show that the nnq condition is not necessary for a matrix $A\in DN(n,r)$ to be CP.

\begin{lem}\label{le: le3.6}
Let $A=B^TB=C^TC$ with $B,C\in \textit{R}^{r\times n}, r = rank(A)$ . Then $B$ is nnq if and only if $C$ is nnq. 
\end{lem}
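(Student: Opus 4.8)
The plan is to exploit Lemma 3.1, which says that any two symmetric rank factorizations of $A$ differ only by an orthogonal change of coordinates. Since the nnq property is manifestly symmetric in $B$ and $C$ (if $C=QB$ with $Q$ orthogonal, then $B=Q^TC$ with $Q^T$ orthogonal), it suffices to prove a single implication: assuming $B$ is nnq, I would show $C$ is nnq, and then read off the converse by swapping the roles of $B$ and $C$.

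First I would invoke Lemma 3.1. Because $A=B^TB=C^TC$ with $B,C\in \textit{R}^{r\times n}$ and $r=rank(A)$, there is an orthogonal matrix $Q\in \textit{R}^{r\times r}$ with $C=QB$. Next I would unwind the hypothesis that $B$ is nnq: there is a set $S\subseteq\{1,\ldots,n\}$ of $r$ column indices so that the $r\times r$ submatrix $B_1$ formed by these columns is nonsingular and $B_1^{-1}B\ge 0$.

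The key observation is that selecting a fixed set of columns commutes with left multiplication by $Q$. Concretely, let $C_1$ be the $r\times r$ submatrix of $C$ formed by the same column set $S$. Since $C=QB$ and column selection acts on the right, $C_1=QB_1$. As $Q$ is orthogonal and $B_1$ is nonsingular, $C_1=QB_1$ is nonsingular, and
\[ C_1^{-1}C = (QB_1)^{-1}(QB) = B_1^{-1}Q^{-1}QB = B_1^{-1}B \ge 0, \]
which exhibits $C$ as nnq, witnessed by the same column set $S$ inherited from $B$. Running the identical argument with $B=Q^TC$ in place of $C=QB$ gives the reverse implication, completing the equivalence.

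I expect essentially no hard step here: once Lemma 3.1 supplies the orthogonal intertwiner $Q$, the whole content collapses to the elementary fact that a fixed choice of $r$ columns is preserved under $B\mapsto QB$. The only point that deserves a line of care is checking that the \emph{same} index set $S$ works for $C$ — that is, that both the nonsingularity of the chosen submatrix and the nonnegativity condition transfer at once — which is immediate from $C_1=QB_1$ together with the displayed identity $C_1^{-1}C=B_1^{-1}B$.
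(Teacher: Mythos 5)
Your proof is correct and takes essentially the same route as the paper's: both invoke Lemma 3.1 to produce an orthogonal $Q\in \textit{R}^{r\times r}$ with $C=QB$, select the same column index set to form $C_1=QB_1$, compute $C_1^{-1}C=(QB_1)^{-1}(QB)=B_1^{-1}Q^TQB=B_1^{-1}B\ge 0$, and obtain the converse by symmetry. If anything, your write-up is slightly cleaner, since the paper's version contains two slips that you avoid (it calls $Q$ a $3\times 3$ matrix where $r\times r$ is meant, and writes $B^{-1}B$ where $B_1^{-1}B$ is intended).
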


 \begin{proof}
Write 
\[ B=[b_1, b_2, \ldots, b_n], \qquad  C=[c_1, c_2, \ldots, c_n]    \] 
Suppose that $B$ is nnq.  Then we may assume without loss of generality that $B_1=[b_1, b_2,\ldots, b_r]\in \textit{R}^{r\times r}$ is an invertible submatrix of $B$ that satisfies $B_1^{-1}B\ge 0$. We want to prove that $C$ is also nnq.  Denote 
$C_1=[c_1,c_2,\ldots, c_r]\in \textit{R}^{r\times r}$. By Lemma 3.1 there exists an $3\times 3$ orthogonal matrix $Q$ such that $C=QB$. Thus $C_1=QB_1$, which implies that $C_1$ is also invertible. Moreover, we have 
\[   C_1^{-1} C =(QB_1)^{-1}(QB) = B_1^{-1}Q^TQB =B^{-1}B \ge 0.  \]
Thus $C$ is also nnq. Conversely, if $C$ is nnq, then we can also prove that $B$ is nnq by the same argument. 
\end{proof}
 
 \indent The following example shows that the nnq condition for a matrix $A\in DN(n,3)$ to be in $CP(n,3)$ is not necessary. \\
  
\begin{exm}\label{ex: ex3.7}
Let $A=\left[ \begin{array}{cccc} 1&1&1&1\\1&2&1&2\\1&1&2&2\\1&2&2&3 \end{array} \right]$.
Then $A=C^TC$ where $C=\left[ \begin{array}{cccc} 1&1&1&1\\0&0&1&1\\0&1&0&1 \end{array} \right]$,
and so $cp\mbox{-}rank(A)=3$. Moreover
$A=B^TB$ where $B=\left[ \begin{array}{cccc} 0.6426&0.1008&0.1008&-0.4409\\
0&0.7071&-0.7071&0\\0.7662&1.2206&1.2206&1.6750 \end{array} \right]$
and no $i,j,k$ with $[b_i, b_j, b_k]^{-1}B \ge 0$. \\
\indent  Moreover, By Lemma 3.6, we know that $A$ has no nnq SR-factor. 
\end{exm}
    
\begin{algm} \label{Alg:alg3.8} A sufficient condition for checking a given matrix $A\in CP(n,3)$ and its CP factorizations.\\
\noindent \textbf{Input:} \ \ a square doubly nonnegative matrix $A\in \textit{S}_+^{n\times n}$ \\
\noindent \textbf{Output:} \ \ a nonnegative matrix $C\in \textit{R}_+^{3\times n}$ such that $A=C^TC$. 
\begin{description}
\item[Step 1.]  Use Cholesky decomposition to $A$ to get matrix $B=(b_{ij})\in \textit{R}^{r\times n}$ such that $A=B^TB$.
\item[Step 2.]  If $B\ge 0$, we are done. otherwise, we denote $B=[b_1,b_2,\ldots,b_n]$.  go to the next step.  
\item[Step 3.]  Compute determinant $d(i,j,k)=det([b_i,b_j,b_k])$ for all $(i,j,k):  1\le i<j<k\le n$. If $d(i,j,k)\neq 0$, then go to the next step. 
\item[Step 4.]  Calculate the matrix $P\equiv [b_i,b_j,b_k]^{-1} B$. If $P\ge 0$, then we have $A\in CP(n,3)$. 
\item[Step 5.]  Find an orthogonal matrix $Q_1\in \textit{R}^{3\times 3}$ such that $Q_1b_i, Q_1b_j, Q_1b_k\in \RPlus^3$. Write $Q=[Q_1b_i, Q_1b_j, Q_1b_k]$, then $Q\in \RPlus^{3\times 3}$, and $A=C^TC$ where $C=QP\in \textit{R}_+^{3\times n}$ with $rank(C)=rank(B)=3$. 
%\item[Step 6.]  Repeat Step 3 and 4. 
\end{description}
\end{algm} 

\indent  The orthogonal matrix $Q$ (of order three) in Step 5 can be computed by Gram-Schmidt orthogonalization. Specifically, if $B_1\equiv [\be_1,\be_2, \be_3]$ is an invertible submatrix of $B$, then we have $B_1=Q_1R$ where 
\[ R=\left[\begin{array}{ccc}
r_{11}&r_{12}&r_{13}\\
         0&r_{22}&r_{23}\\
         0&          0&r_{33}\end{array}
\right]
\]
where  $Q_1$ is an orthogonal matrix of order 3, and $r_{ii}=\sqrt{\norm{\be_i}}, r_{ij}=\seq{\be_i,\be_j}, \forall i,j\in \set{1,2,3}$. Thus $R\in \textit{R}^{3\times 3}$ is obviously a nonnegative upper triangle matrix. Now we take $Q=Q_1^T$, then we have $QB_1=R$ which satisfies the requirement in Step 5. \\
 
\begin{exm}\label{ex: ex3.9}
This example shows how to use Algorithm 3.8 to check a matrix in $CP(n,3)$ in our situation. \\
\indent  We let  
\[A=\left[ \begin{array}{ccccc} 
 1.2295 & 0.4315 & 0.0699& 0.7037&1.3685\\
 0.4315 & 0.3006 & 0.0435& 0.4241&0.7177\\
 0.0699 & 0.0435 & 0.0078& 0.0743&0.1098\\
 0.7037 &0.4241  &0.0743 & 0.7128&1.0795\\
 1.3685 &0.7177  &0.1098 & 1.0795&1.9030
\end{array}\right]
\]
It is easy to see that $A\in DN(5,3)$.  We use Matlab function EIGS to obtain its first three eigenvalues and the corresponding eigenvectors, which form a rank-3 decomposition $A=B^TB$ where 
\[  B=\left[ \begin{array}{ccccc}
1.0272 & 0.5025 & 0.0795 & 0.7824 &1.3729\\
-0.4151 & 0.1924 & 0.0309 & 0.2604 &0.0900\\
-0.0450 & 0.1053 &-0.0224 &-0.1814 &0.0998
\end{array}\right]
\] 
Now we choose $B_1:=B[:, 1:3]$, i.e., $B_1$ is the submatrix of $B$ consisting of the first three columns of $B$.  We find that 
\[ B_1^{-1}B= \left[ \begin{array}{ccccc}
 1& 0 & 0& 0.0526 &0.5396\\
 0& 1 & 0& 0.1055 &1.4368\\
 0& 0 & 1& 8.4895 &1.2159
\end{array}\right]
\]
It follows that $B$ is an nnq matrix, thus we have $A\in CP(5,3)$. In fact, we have CP decomposition $A=C^TC$ with 
\[ C =  \left[ \begin{array}{ccccc}
0.1278 & 0.4274&0.0587 &0.5505 &0.7545\\
0.6399 & 0.1758&0.0602 &0.5631 & 0.6711\\
0.8965 & 0.2949&0.0267 &0.3046 & 0.9398
\end{array}\right]
\]
which implies that $A\in CP(5,3)$ since $3=rank(A)\le \textit{cp-rank}(A)\le 3$. 
\end{exm}

\indent  Denote by $\textit{C}_A$ the convex cone generated by the columns of matrix $A$, $Extr(C)$ for the set of all
 extreme rays of a cone $C$, and denote $c_M\equiv \abs{Extr(\textit{C}_M)}$ for any matrix $M$ .  It is shown in \cite{x} that $c_A =c_B$  where $A=B^TB$.  \\
 \indent  The following theorem deals with the cases when the number of extreme rays of cone generated by the columns of $A$ 
 equals the rank (and the CP-rank) of $A$.
 \begin{lem}\label{le: le3.10}
 \begin{description}
 \item[(1)]  Let $A\in CP(n,r)$ where $r\le 4$. Then $\abs{Extr(\textit{C}_A)}= r$. 
 \item[(2)]  Let $A\in DN(n,r)$ with $m=\abs{Extr(\textit{C}_A)}\le 4$. Then $A\in CP_n$ with $\cprk(A)\le m$.. 
 \end{description}
 \end{lem}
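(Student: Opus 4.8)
The plan is to reduce both parts to statements about the cone $C_B$ generated by the columns $b_1,\dots,b_n$ of a symmetric rank factor, and to exploit two structural features of that cone. Writing $A=B^TB$ with $B\in\mathbb{R}^{r\times n}$ of rank $r$ (a symmetric rank factorization, available since $A$ is PSD), the cited identity $c_A=c_B$ lets me work entirely inside $\mathbb{R}^r$. First I would record that $C_B$ is \emph{pointed}: if some nonzero $v$ satisfied $v,-v\in C_B$, then writing $v=\sum_i\la_i b_i$ and $-v=\sum_j\mu_j b_j$ with $\la,\mu\ge 0$ gives $0\le\sum_{i,j}\la_i\mu_j\seq{b_i,b_j}=\seq{v,-v}=-\norm{v}^2\le 0$, since $\seq{b_i,b_j}=a_{ij}\ge 0$ for the (doubly) nonnegative $A$; hence $v=0$. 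Second, since $rank(B)=r$ the columns span $\mathbb{R}^r$, so $C_B$ is full-dimensional.

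For part (1), the lower bound is then immediate: a pointed, full-dimensional, finitely generated cone in $\mathbb{R}^r$ is generated by its finitely many extreme rays, and these must linearly span $\mathbb{R}^r$, so there are at least $r$ of them; thus $c_A=c_B\ge r$. The delicate step --- and the one I expect to be the real obstacle --- is the reverse inequality needed for equality, namely that no more than $r$ extreme rays can occur. This amounts to showing the cone is \emph{simplicial}, i.e.\ that one can choose linearly independent extreme-ray generators. For $r\le 2$ a pointed full-dimensional cone automatically has exactly $r$ extreme rays, but for $r=3,4$ this is exactly the subtle point, and it is where the hypothesis $r\le 4$ would have to be used; I would want to verify carefully that equality is not lost here, since the easy argument only yields $c_A\ge r$.

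For part (2) I would argue constructively. The extreme rays of $C_B$ are spanned by $m$ of the columns, say $b_{i_1},\dots,b_{i_m}$; set $B_E=[b_{i_1},\dots,b_{i_m}]\in\mathbb{R}^{r\times m}$, which has rank $r$ because its columns span the full-dimensional $C_B$. Since $C_B$ is pointed, every column lies in the cone of these generators, so $b_j=B_E p_j$ with $p_j\ge 0$; collecting the $p_j$ gives $B=B_EP$ with $P\in\RPlus^{m\times n}$. Then
\[ A=B^TB=P^T(B_E^TB_E)P=P^TGP,\qquad G:=B_E^TB_E. \]
The matrix $G$ is $m\times m$, PSD as a Gram matrix, with $G_{kl}=\seq{b_{i_k},b_{i_l}}=a_{i_k i_l}\ge 0$, so $G$ is the doubly nonnegative principal submatrix $A[\{i_1,\dots,i_m\}]$ of order $m\le 4$. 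By the Maxfield--Minc result (and the corresponding facts for orders below $4$, cf.\ Lemma 3.2), every doubly nonnegative matrix of order at most $4$ is completely positive with \cprk\ at most its order; hence $G=E^TE$ for some $E\in\RPlus^{s\times m}$ with $s\le m$. Substituting yields $A=(EP)^T(EP)$ with $EP\in\RPlus^{s\times n}$, so $A\in CP_n$ and $\cprk(A)\le s\le m$, as claimed.

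The main obstacle is thus split between the two parts. In part (2) the only real work is the cone-theoretic bookkeeping --- pointedness and full-dimensionality of $C_B$, the extreme generators spanning $\mathbb{R}^r$, and the factorization $B=B_EP$ --- together with correctly invoking the order-$\le 4$ completely positive results to control $\cprk(G)\le m$; this is routine once pointedness is secured. In part (1) the genuine difficulty is the equality: the bound $c_A\ge r$ is cheap, but establishing $c_A\le r$, i.e.\ simpliciality of the minimal cone, is the crux, and this is precisely the place where the restriction $r\le 4$ must be brought to bear.
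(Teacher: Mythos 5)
Your part (2) is correct and is in essence the paper's own proof in different dress: the paper picks the $m$ extreme columns $A_1$ of $A$ itself, writes $A$ in the block form $\left[\begin{array}{cc}A_{11}&A_{11}X\\ X^TA_{11}&X^TA_{11}X\end{array}\right]$, factors the order-$m$ principal block $A_{11}\in DN_m=CP_m$ as $B_1B_1^T$ and assembles $B=[B_1^T,\,B_1^TX]^T$, whereas you pick the $m$ extreme columns of a rank factor $B$, write $B=B_EP$ and $A=P^TGP$ with $G=A[\{i_1,\ldots,i_m\}]$ and then factor $G$; both arguments rest on the same two ingredients, namely that every extreme ray of a finitely generated cone passes through a generator, and that $DN_m=CP_m$ with $\cprk\le m$ for $m\le 4$. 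Your version has the small advantage of explicitly proving pointedness of the cone (via $\seq{b_i,b_j}=a_{ij}\ge 0$), a fact the paper uses tacitly when it asserts $A=A_1W$ with $W\ge 0$.

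For part (1), the gap you flag is real, and it is worse than you feared: the missing inequality $c_A\le r$ is not merely the crux, it is false, and the paper's own proof of it is erroneous. The paper takes a minimal CP factorization $A=BB^T$ with $B=[\be_1,\ldots,\be_r]\in\RPlus^{n\times r}$ and asserts $\abs{Extr(C_A)}=\abs{Extr(C_B)}\le r$, where $C_B$ is generated by the $r$ columns of $B$; but the cited identity $c_A=c_C$ applies to factorizations $A=C^TC$, i.e.\ here to $C=B^T$, so $C_A$ is linearly isomorphic to the cone generated by the $n$ \emph{rows} of $B$ --- exactly the cone your argument analyzes --- and that cone can have more than $r$ extreme rays. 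Concretely, take $b_1=(1,0,0)^T$, $b_2=(0,1,0)^T$, $b_3=(1,0,1)^T$, $b_4=(0,1,1)^T$ and $A=B^TB$ with $B=[b_1,b_2,b_3,b_4]$: then $A$ is DN of rank $3$ and $B\ge 0$ has three rows, so $A\in CP(4,3)$, yet all four $b_j$ span extreme rays of their cone (e.g.\ $ab_2+bb_3+cb_4=(b,a+c,b+c)^T$ can never equal $b_1$ with $a,b,c\ge 0$), whence $c_A=4>3$. The paper even contradicts its own lemma in its closing remark, which notes that $C_A$ can have arbitrarily many extreme rays for $A\in CP(n,3)$. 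What the paper's compression argument (if $m<r$, factor $X_1X_1^T\in DN_m=CP_m$ to shorten the factorization, contradicting $\cprk(A)=r$) genuinely proves is that all $r$ columns of a minimal nonnegative factor span extreme rays of $cone(\be_1,\ldots,\be_r)$ --- a statement about the factor's cone, not about $C_A$. So your lower bound $c_A\ge r$ is exactly what is true, your refusal to claim equality was sound, and the simpliciality step you could not supply cannot be supplied by any argument.
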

 
 \begin{proof}
(1).\ \  Suppose that $A=[A_1,\ldots,A_n]=BB^T$, where $B=[\be_1,\ldots,\be_r]\in \textit{R}_+^{n\times r}$ with $r=rank(A)=\cprk(A)\le 4$.  Denote $m=\abs{Extr(\textit{C}_A)}$. Then we have $m=\abs{Extr(\textit{C}_B)}\le r$. Suppose that $m<r$.  We may assume without loss of generality that $Ext(C_B)=\set{\be_1,\be_2,\ldots,\be_m}$,  and denote $B_1=[\be_1,\be_2,\ldots,\be_m]\in \RPlus^{n\times m}$. By definition of the extreme rays, there exists some nonnegative matrix 
$X_1\in  \RPlus^{m\times r}$ such that $B=B_1X_1$. Now we consider matrix $Y=X_1X_1^T\in  DN_{m}=CP_m$ (note that $m\le 4$). Then there exists $X_2\in  \RPlus^{m\times m}$ such that $Y=X_1X_1^T=X_2X_2^T$. Thus 
\[ A =BB^T =B_1(X_1X_1^T) B_1^T =B_1(X_2X_2^T) B_1^T =B_2B_2^T  
\] 
where $B_2=B_1X_2\in \RPlus^{n\times m}$. It follows that $r=\cprk(A)\le m$, a contradiction. Consequently we have $m=r$. 
(2).\ \  Let $A\in DN(n,r)$, $m\le 4$, and denote $E\equiv Extr(\textit{C}_A)$. We may assume w.l.o.g. that $E=\set{\al_1,\ldots,\al_m}$ where $\al_j$ is the $j$th column of $A$.  Now we denote $A_1=[\al_1,\ldots,\al_m]\in \textit{R}^{n\times m}$. By the definition of $E$ there exists a nonnegative matrix $W\in \textit{R}_+^{m\times n}$ such 
that $A=A_1W$.  Now we write $A_1$ in block form 
\[  A_1=\left[\begin{array}{c} A_{11}\\ A_{21} \end{array} \right],  \]
where $A_{11}\in \textit{R}^{m\times m}$, then $A$ has the blocking form  
\begin{equation}\label{eq:blockA}
A =\left[\begin{array}{cc} A_{11} & A_{11}X\\ 
                                                  X^TA_{11} & X^TA_{11}X\end{array} \right]
\end{equation}
due to the symmetry of $A$, where $X\in  \RPlus^{m\times (n-m)}$.  It follows from the fact $A\in DN(n,r)$ that $A_{11}\in DN(m,r)$ where $m\le 4$. Thus we have $A_{11}\in CP_m$ since $CP_m=DN_m$ for $m\le 4$. Thus there exists some nonnegative $m\times m_1$ matrix $B_1$ ($r\le m_1=\cprk(A_{11})\le m$) such that $A_{11}=B_1B_1^T$.  Now we choose 
$B = [B_1^T, B_1^TX]^T$, then we have $A=BB^T$ with $B\in \RPlus^{n\times m_1}$. Consequently we have   
$A\in CP_n$ with $\cprk(A)=m_1\le m$. 
\end{proof}

\indent  Given a matrix $A\in CP(n,3)$, we have $\abs{Extr(\textit{C}_A)}\ge 3$ by Lemma 3.10. But $A$ may not be a \emph{nnq} matrix.  However, if  we have $\abs{Extr(\textit{C}_A)}=3$ , then $A$ must be nnq (and thus CP). Specifically, we have 
 \begin{thm}\label{th: thm3.11}
Let $A\in DN(n,3)$ with $\abs{Extr(\textit{C}_A)}=3$.  Then $A\in CP(n,3)$ if and only if $A$ is nnq.
 \end{thm}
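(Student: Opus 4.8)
The plan is to establish the two implications separately. The ``if'' direction is a direct application of Theorem 3.5(i) and, notably, does not use the extreme-ray hypothesis; the ``only if'' direction is the substantive part, and it is precisely where the assumption $\abs{Extr(\textit{C}_A)}=3$ is needed. Without that assumption the statement is false: in Example 3.7 one has $A\in CP(4,3)$ yet $\abs{Extr(\textit{C}_A)}=4$ and $A$ is not nnq.

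For the ``if'' direction, suppose $A$ is nnq and write $A=B^TB$ with $B=[b_1,\ldots,b_n]\in\textbf{R}^{3\times n}$ of rank $3$. Since the $i$-th row of $A$ equals $b_i^TB$, a $3\times n$ nnq submatrix of $A$, say on rows $I=\{i_1,i_2,i_3\}$, has the form $B_I^TB$ with $B_I=[b_{i_1},b_{i_2},b_{i_3}]$ nonsingular; its nnq-ness means $(B_I^TB_J)^{-1}(B_I^TB)\ge 0$ for some nonsingular $3\times3$ block $B_I^TB_J$. As $(B_I^TB_J)^{-1}(B_I^TB)=B_J^{-1}B$, this says exactly that the SR-factor $B$ is nnq, and Theorem 3.5(i) then gives $A\in CP(n,3)$.

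For the ``only if'' direction, suppose $A\in CP(n,3)$. Since $\cprk(A)=rank(A)=3$, write $A=C^TC$ with $C=[c_1,\ldots,c_n]\in\RPlus^{3\times n}$ and $rank(C)=3$. The goal is to show that all $c_i$ lie in a simplicial cone generated by three of them, which makes $C$, and hence $A$, nnq. Because every $c_i\ge 0$, the cone $\textit{C}_C$ lies in the nonnegative orthant and so is pointed; because $rank(C)=3$, the $c_i$ span $\textbf{R}^3$, so $\textit{C}_C$ is full-dimensional. Using the identity $c_A=c_C$ for $A=C^TC$ from \cite{x} together with the hypothesis, $\abs{Extr(\textit{C}_C)}=\abs{Extr(\textit{C}_A)}=3$. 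A pointed, full-dimensional cone in $\textbf{R}^3$ with exactly three extreme rays is simplicial, so after relabeling its extreme generators $c_1,c_2,c_3$ are linearly independent; thus $C_1=[c_1,c_2,c_3]$ is nonsingular, and since each $c_i$ lies in the cone generated by $c_1,c_2,c_3$ we get $P:=C_1^{-1}C\ge 0$. Hence $C$ is nnq, and reading this back on $A$ (the submatrix on rows $\{1,2,3\}$ is $C_1^TC=C_1^TC_1P$, with nonsingular block $C_1^TC_1$ and $(C_1^TC_1)^{-1}(C_1^TC)=P\ge 0$) shows that $A$ is nnq.

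The main obstacle is the transfer of information in the ``only if'' direction: the hypothesis constrains the cone $\textit{C}_A$ of the columns of $A$, whereas nnq-ness is most naturally read off the CP-factor cone $\textit{C}_C$. The identity $c_A=c_C$ from \cite{x} bridges this gap, and once $\abs{Extr(\textit{C}_C)}=3$ is known the simplicial structure of a pointed, full-dimensional $3$-cone with three extreme rays completes the proof. A minor, routine point is the bookkeeping that identifies ``$A$ is nnq'' in the symmetric-matrix sense with ``an SR-factor of $A$ is nnq''; this is immediate from Lemma 3.6 together with the Gram computation used above.
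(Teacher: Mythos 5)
Your proposal is correct and takes essentially the same route as the paper: for necessity you pass to a minimal nonnegative factor $C$ with $A=C^TC$, transfer the hypothesis through the identity $c_A=c_C$ of \cite{x}, use that a pointed full-dimensional cone in $\textbf{R}^3$ with three extreme rays is simplicial to get $P=C_1^{-1}C\ge 0$, and read nnq-ness back onto $A$ via the Gram identity $(C_1^TC_1)^{-1}(C_1^TC)=C_1^{-1}C$ --- which is exactly the content of the paper's normal-equations computation for its least-squares functions $f_j$; for sufficiency you invoke Theorem 3.5(i), which is the same block factorization $A=B^TB$ with $B=[B_1,\,B_1A_3]$ that the paper writes out directly, and you rightly observe it does not use the extreme-ray hypothesis. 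If anything, your version is slightly tighter than the paper's, since you make explicit the bridge $\abs{Extr(\textit{C}_A)}=\abs{Extr(\textit{C}_C)}$ that the paper uses only implicitly when it identifies extreme rays of $\textit{C}_A$ with conically independent columns of the factor.
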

 
 \begin{proof}
 We write $A=Gram(\al_1,\al_2,\ldots,\al_n)\in DN(n,3)$ where $\al_j\in \textit{R}^{3}$.  Then the theorem states that when the cone $\textit{C}_A$ has three extreme rays, we have $A\in CP(n,3)$ if and only if  there exists a $3\times 3$ nonsingular principal submatrix of $A$, denoted by $A_{11}$,  such that $A_{11}^{-1}A_{1}\ge 0$ where $A_{1}$ is the $3\times n$ submatrix of $A$ consisting of the three rows of $A$ corresponding to those of $A_{11}$'s.\\
 For convenience, we may assume that all diagonal elements of $A$ are positive since otherwise $A$ can be reduced to a lower order.  Furthermore, we can assume that all the diagonal elements of $A$ are ones since otherwise we can substitute $A$ by $D^{-1/2}AD^{-1/2}$ which has the same CP property with $A$ in our interest. \\
 \indent For the necessary, we assume that $A\in CP(n,3)$.  Write $A$ as $A=B^TB$ with $B=[\al_1,\al_2,\ldots,\al_n]\in \textit{R}^{3\times n}, rank(B)=3$.  
 
 Denote $\Gamma =\set{\al_1,\al_2,\ldots,\al_n}\subset \textit{R}_+^3$. Then there exists an orthogonal matrix $Q$ of order three such that $Q\al_1,Q\al_2,\ldots,Q\al_n\in \textit{R}_+^3$. Denote $\be_j=Q\al_j, \forall j=1,2,...,n$. Then we have $A=Gram(\be_1,\ldots,\be_n)$.  Since $\textit{cp-rank}(A)=rank(A)=3$, we know that $A$ must be an interior point in the cone $CP(n,3)$. Now we denote $C_A=\textit{conv}\set{\be_1,\be_2,\ldots,\be_n}$. It follows from Lemma 3.4 that $\textit{cp-rank}(A)$ is exactly the maximal number of conically independent vectors (called \emph{con-rank}) among $\Gamma$ . Note that a set of vectors $\Gamma$ is called conically dependent if there exists a vector $\al_j$ in $\Gamma$ such that $\al_j$ can be represented as a nonnegative linear combination, and called conically independent if $\Gamma$ is not conically dependent. Since $\abs{Extr(\textit{C}_A)}=3$, there exist three conically independent vectors, say, $\al_1,\al_2,\al_3\in \Gamma$, such that any other vector in $\Gamma$ can be expressed as a nonnegative linear combination of $\set{\al_1,\al_2,\al_3}$.  In fact, the three vectors $\al_1,\al_2,\al_3$ are also linearly independent. \\
 \indent  For each $j\in \set{1,2,...,n}$, there exists a nonnegative optimal solution $(x_0,y_0,z_0)$ to minimize the function 
 \begin{equation}
 f_j(x,y,z)=\norm{\al_j-x\al_1-y\al_2-z\al_3}
 \end{equation} 
with $f_j(x_0,y_0,z_0)=\min f_j(x,y,z) =0$. Since the minimization of function $f$ is equivalent to that of  function $F_j(x,y,z)\equiv f_j(x,y,z)^2$, i.e., 
 \[ \min_{x,y,z\ge 0} F_j(x,y,z) = \seq{\al_j-x\al_1-y\al_2-z\al_3, \al_j-x\al_1-y\al_2-z\al_3}
  \] 
By taking the gradient of $F$  (notice that $\seq{\al_i,\al_j}=a_{ij}$ and $a_{ii}=1$ for all $i,j\in \set{1,2,...,n}$) to be zero, we get 
 \begin{eqnarray}
              2x+2a_{12}y+2a_{13}z =&2a_{1j} \\
 2a_{12}x+             2y+2a_{23}z= &2a_{2j} \\
 2a_{13}x+ 2a_{23}y+            2z= &2a_{3j}   
 \end{eqnarray}
 Thus we get 
 \[ \left[ \begin{array}{c}
 x\\  y\\ z \end{array}\right] = \left[ \begin{array}{ccc}
1  & a_{12} & a_{13}\\  
a_{12}  & 1 & a_{23}\\ 
a_{13}  & a_{23} & 1\end{array}\right]^{-1} \left[ \begin{array}{c}
 a_{1j}\\  a_{2j}\\ a_{3j} \end{array}\right]  
 \]
which is equivalent to $A_{11}^{-1}A_{1}\ge 0$ if we take $A_{11}=A([1,2,3],[1,2,3])$ and $A_{1}=A([1,2,3],:)$ .\\
 \indent Now we come to prove the sufficiency.  Suppose that $A\in DN(n,3)$ and that  
 $A_{11}^{-1}A_1\ge 0$ for $A_1=A([i,j,k],:), A_{11}=A([i,j,k],[i,j,k])$ for some positive integers $i,j,k$ ($1\le i<j<k\le n$), i.e.,
 $A_{11}$ is a submatrix of $A$ whose columns and rows are both indexed by $\set{i,j,k}$ (here and afterwards we abuse
MATLAB notations when the matrix indexing is concerned) . We may assume w.l.o.g. that $A_{11}=A(1:3,1:3)$ such that
$rank(A_{11})=3$ and $A_{11}^{-1}A_1\ge 0$ where $A_1 = A(1:3, :)$. We need
to prove that matrix $A$ is a CP matrix with $\textit{cp-rank}(A)=rank(A)=3$. \\
\indent For this purpose, we write $A$ into the following blocking form:
\begin{equation}\label{eq: blockA}
\left[\begin{array}{cc}
A_{11} & A_{12}\\
A_{21} &A_{22}
\end{array}\right]
\end{equation}
Then $A_1=[A_{11}, A_{12}]$ and $A_{11}^{-1}A_1=[I_3, A_{11}^{-1}A_{12}]\in \RPlus^{3\times n}$. Denote
$A_3=A_{11}^{-1}A_{12}$. Then we have $A_3\in \RPlus^{3\times (n-3)}$ and $A_{12}=A_{11}A_{3}$. By the symmetry of
$A$,  we get $A_{21}=A_{12}^T = A_3^T A_{11}$. Now that $A_{11}\in DN(3,3)=CP(3,3)$, so there is a nonnegative $3\times 3$ matrix $B_1$ such that $A_{11}=B_1^TB_1$.  Denote $B=[B_1, B_1A_3]$, then $B\in \RPlus^{3\times n}$ and $A=B^TB$ which implies that $A\in CP(n,3)$. The proof is completed.    
\end{proof}

\indent   Theorem 3.11 can be used to determine when a matrix $A\in DN(n,3)$ is in $CP(n,3)$ when the cone $\textit{C}_A$ has three extreme rays. The following example illustrates the effectiveness of  Theorem 3.11. \\
\begin{exm}\label{ex: ex3.12}
Consider matrix $A$ in Example 3.10.  We let $A_{11}=A(1:3,1:3)$, and $A_1=A(1:3, :)$. Then we have  
\begin{equation}\label{exm: ex12}
A_{11}^{-1}A_1 = \left[\begin{array}{ccccc}
1  & 0  & 0 & 0.0538 & 0.5388\\
0  & 1  & 0 & 0.1291 & 1.4292\\
0  & 0 &  1 & 8.3229 & 1.2776 
\end{array}\right]
\end{equation}
Thus we once again confirm that $A\in CP(5,3)$ by Theorem 3.11.  
 \end{exm}

%\begin{exm}
% Take $A$ to be the matrix in Example 3.8. 
%\end{exm} 
% 
 \indent The characterization of $CP(n,3)$ is not over as we can see from Example 3.8 that the condition in Theorem 3.11 cannot be removed. However, there are some cases for $A\in CP(n,3)$ when $\textit{C}_A$ has more than three extreme rays. Actually we can find an arbitrary number of extreme rays for  $\textit{C}_A$.  For example, in Example 3.8, there are five extreme rays in $\textit{C}_A$. 

\noindent \textbf{Acknowledgement.} The authors would like to thank Professor Jor-Ting Chan for providing
Example 2.2.

\end{document}